\def\bx{{\bm x}}
\def\by{{\bm y}}
\def\bz{{\bm z}}
\def\be{{\bm e}}            
\def\bT#1{\left\|#1\right\|}              
\def\BT#1{\left\|#1\right\|^2}
\begin{document}

\markboth{N.-C. Wu and C.-Z. Liu}{The asynchronous LSPIA method}
\title{Asynchronous progressive iterative approximation method for least-squares fitting}
 
\author[N.-C. Wu and C.-Z. Liu]{Nian-Ci Wu \affil{1} and Cheng-Zhi  Liu \affil{2}\comma\corrauth}
\address{\affilnum{1}\ School of Mathematics and Statistics, South-Central Minzu University, Wuhan 430074, China.\\
\affilnum{2}\ School of Mathematics and Finance, Hunan University of Humanities, Science and Technology, Loudi 417000,  China.}
\email{{\tt it-rocket@163.com} (C.-Z. Liu)}
%
\begin{abstract}
For large-scale data fitting, the least-squares progressive-iterative approximation (LSPIA) methods were proposed by Lin et al. (SIAM Journal on Scientific Computing, 2013, 35(6):A3052-A3068) and Deng et al. (Computer-Aided Design, 2014, 47:32-44), where the constant step sizes were used.  In this work, we further accelerate the LSPIA method in the sense of a Chebyshev semi-iterative scheme and present an asynchronous LSPIA (ALSPIA) method to fit data points. The control points in ALSPIA are updated by utilizing an extrapolated variant and an adaptive step size is chosen according to the roots of Chebyshev polynomials. Our convergence analysis reveals that ALSPIA is faster than the original LSPIA  method in both cases of singular and nonsingular least-squares fittings. Numerical examples show that the proposed algorithm is feasible and effective.
\end{abstract}

\keywords{Data fitting, progressive iterative approximation, least-squares, Chebyshev polynomial}

\ams{65D07, 65D10, 65D17, 65D18}

\maketitle


\section{Introduction}\label{Sec:1}

The least-squares fitting is a classical approach for fitting a blending curve (resp., surfaces) to data points. Given a sequence of points, a blending fitting curve (resp., surfaces) is constructed by minimizing an error criterion that measures the distance from the function to the data points and some smoothness terms. The least-squares progressive-iterative approximation (LSPIA) method, proposed systematically by Deng and Lin \cite{14DL}, is simple and efficient. After an initial blending curve (resp., surfaces) is generated, it iteratively adjusts the control points so that the limit curve (resp., surfaces) can approximate all the data points. Compared to the classical least-squares fitting, such as \cite{03PS}, the LSPIA method admits several useful properties preferred for some applications in geometric modeling, including flexibility and adaptivity.

The fastest convergence rate of LSPIA is given in  \cite[Theorem 3.1]{14DL} when the collocation matrix is of full rank. Lin et al. further studied the convergence of LSPIA for the singular least-squares fitting systems, see \cite[Theorem 2.8]{18LCZ} and \cite[Section 2.3]{13LZ}.
Combining the advantages of generalized B-splines with these of the geometric iterative method, a fresh least-squares method was given by introducing two different kinds of weights in \cite{16ZGT}. The regularized LSPIA method was provided in \cite{18LLGZHS}, which presents a progressive iterative scheme of non-tensor product bivariate spline surfaces. Several well-known accelerations of LSPIA can be found, e.g., in \cite{19EL, 20HW, 21Wang}. Very recently, Rios and J\"{u}ttler proved that LSPIA  is equivalent to a gradient descent method and proposed an extension of LSPIA with parameter correction focusing on the stochastic scheme \cite{22RJ}.

Other extensions of the progressive-iterative approximation (PIA) method for tensor product surface  were presented in \cite{11LZ, 21LLH}. The case of triangular B\'{e}zier surface  was studied by Liu et al. \cite{20LHL}.
To obtain a new control mesh,
Chen et al. \cite{08CLTYC} progressively modified the vertices of a given mesh and proposed a progressive interpolation based on the Catmull-Clark subdivision.
A similar algorithm was developed in \cite{21WLLMD} for the Loop subdivision surface interpolation.
Moreover, Deng and Ma \cite{12DM} proposed a weighted progressive interpolation algorithm for the Loop subdivision surface to improve the convergence rate. Blending the conjugate gradient method \cite{Saad2003} and the PIA method, the interpolation method for Loop and Catmull-Clark subdivision surfaces was given by Hamza and Lin in \cite{22HL}. We refer to the  survey \cite{18LMD} and the references therein for more details.

In this work, we present an accelerated LSPIA method to fit data points by using B-spline basis.
We update the control points by introducing an adaptive step size, rather than a constant one in \cite{14DL}. We call it the asynchronous LSPIA (ALSPIA) method. ALSPIA is deduced in the sense of Chebyshev semi-iterative methods, as stated in Section \ref{Sec:2}. Our convergence analysis reveals that ALSPIA is faster than the original LSPIA when the step size is chosen based on the roots of Chebyshev polynomials. These theoretical results are derived in Section \ref{Sec:3}.
In Section \ref{Sec:4}, we show some numerical experiments which verify our theoretical analysis and demonstrate that, in comparison
with LSPIA \cite{14DL,18LCZ,13LZ}, faster convergence has been obtained by our method.
Finally, we end this work with some conclusions in Section \ref{Sec:5}.

\section{The ALSPIA method}\label{Sec:2}
In this section, we present the ALSPIA method  for curve and surface fittings by using  blending bases.   The process is elaborated on below.

\subsection{The case of curves}
Let $\left\{\mu_i(x)\right\}_{i=0}^n$ be a blending basis sequence, i.e., these functions are nonnegative and satisfy $\sum_{i=0}^n\mu_i(x)=1$.
 Given a set of point set $\left\{\bm q_j\right\}_{j=0}^m \subseteq \mathbb{R}^{2}\ \rm{or}\ \mathbb{R}^{3}$ ($m \geq n$) to be fitted,  each $\bm q_j$ being associated with a parameter $x_j$ for $j\in [m]$, where $[\ell]:=\{0,1,2,\cdots,\ell\}$ for an integer $\ell\geq 1$, and some points ${\bm p}_{i}^{(0)}$ for $i\in[n]$ as the initial control points, we start with an initial curve
\begin{align*}
\mathcal{C}^{(0)}(x)=\sum_{i=0}^n \mu_i(x) {\bm p}_{i}^{(0)},~x\in [x_0,~x_m]
\end{align*}
and compute
\begin{align*}
\left\{ \begin{array}{l}
{\bm p}_{i}^{(1)}  = {\bm p}_{i}^{(0)} + {\bm \delta}_{i}^{(0)},~i\in[n], \vspace{1ex}\\
{\bm \delta}_i^{(0)} = \omega_{0} \sum_{j=0}^m \mu_i(x_j){\bm r}_{j}^{(0)},~i\in[n],\vspace{1ex}\\
{\bm r}_{j}^{(0)}    = {\bm q}_{j} - \mathcal{C}^{(0)}(x_j),~j\in [m],
\end{array}\right .
\end{align*}
where ${\bm p}_{i}^{(1)}$ is the $i$th new control point, ${\bm \delta}_i^{(0)} $ is the $i$th  adjusting vector, ${\bm r}_{j}^{(0)}$ is the $j$th difference vector, and $\omega_{k}$ is an adaptive  step size.

Sequentially, assume that we have obtained the $k$th curve $\mathcal{C}^{(k)}(x)$ and let
\begin{align}\label{eq:ALSPIA-error+iteCurve}
\left\{ \begin{array}{l}
{\bm p}_{i}^{(k+1)}  = {\bm p}_{i}^{(k)} + {\bm \delta}_{i}^{(k)},~i\in[n],\vspace{1ex}\\
{\bm \delta}_i^{(k)} = \omega_{k} \sum_{j=0}^m \mu_i(x_j){\bm r}_{j}^{(k)},~i\in[n],\vspace{1ex}\\
{\bm r}_{j}^{(k)}    = {\bm q}_{j} - \mathcal{C}^{(k)}(x_j),~j\in [m].
\end{array}\right .
\end{align}
The next curve is generated by
\begin{align*}
\mathcal{C}^{(k+1)}(x) =\sum_{i=0}^n  \mu_i(x) {\bm p}_{i}^{(k+1)}.
\end{align*}

In this way, we get a curve sequence $\left\{ \mathcal{C}^{(k)}(x) \right\}_{k=0}^{\infty}$. The parameter sequence $\left\{\omega_k \right\}_{k=0}^{\infty}$ is introduced to accelerate convergence. The geometric interpretation of ALSPIA is intuitively shown
in Figure \ref{fig:ALSPIA-plot}.

 \begin{figure}[!h]
	\centering
	\includegraphics[width=4in,height=3in]{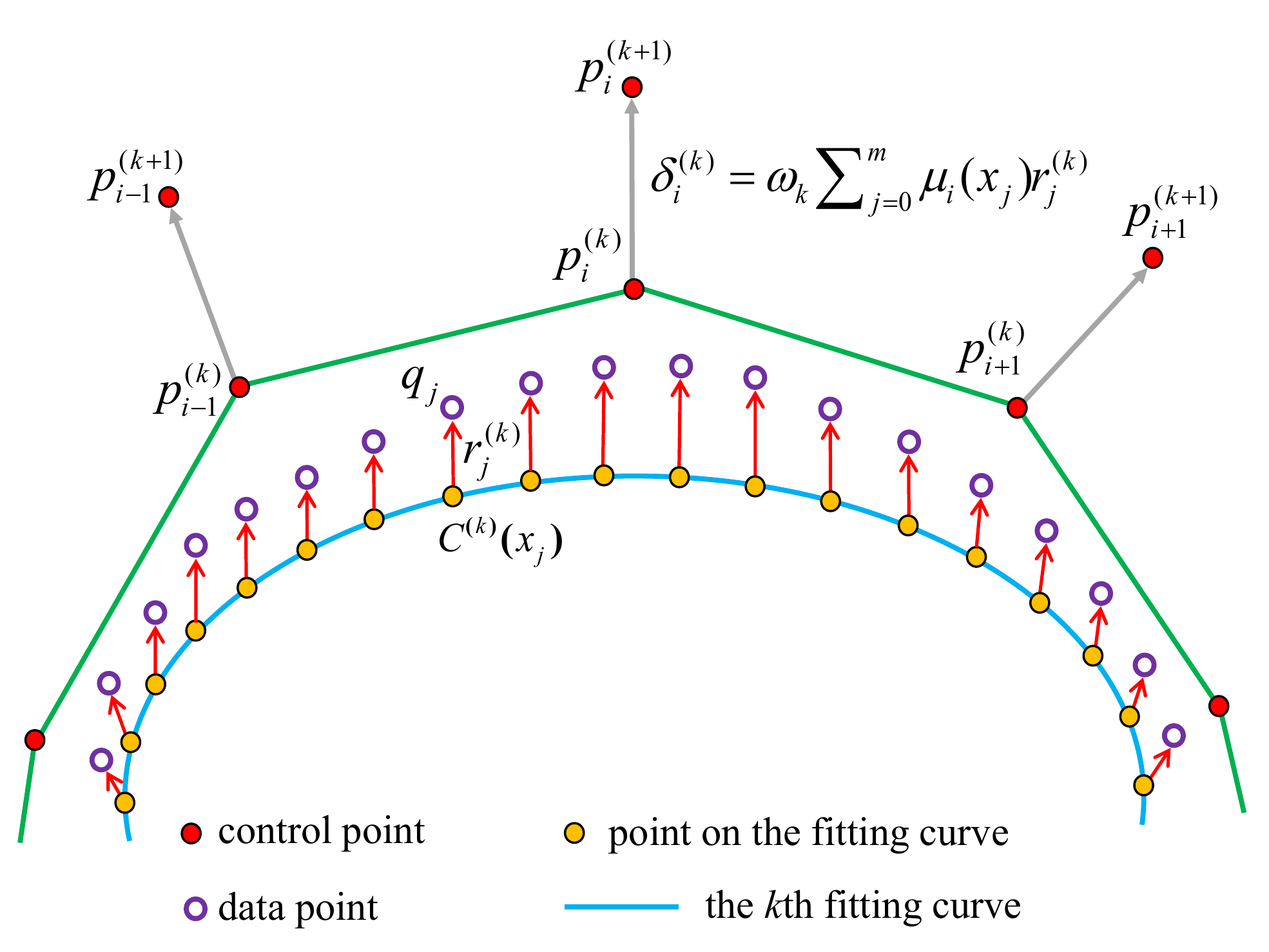}
	\caption{ Geometric interpretation of ALSPIA for curve  fitting. }
	\label{fig:ALSPIA-plot}
\end{figure}

\subsection{The case of surfaces}
 We now turn to the case of surfaces.  Let $\left\{\bm Q_{hl}\right\}_{h,l=0}^{m,p} \subseteq \mathbb{R}^{3}$ ($m, p \geq n$) be a set of point set  to be fitted, each $\bm Q_{hl}$ being associated with a parameter pair $(x_h, y_l)$ for $h\in [m]$ and $l\in [p]$ and ${\bm P}_{ij}^{(0)}$ for $i$, $j\in[n]$ be the initial control points. We construct an initial surface
\begin{align*}
\mathcal{S}^{(0)}(x,y)=\sum_{i=0}^n \sum_{j=0}^n \mu_i(x) \mu_j(y) {\bm P}_{ij}^{(0)},~x\in [x_0,~x_m],~y\in [y_0,~y_p]
\end{align*}
and compute
\begin{align*}
\left\{ \begin{array}{l}
{\bm P}_{ij}^{(1)} = {\bm P}_{ij}^{(0)} +  {\bm \Delta}_{ij}^{(0)},~i,~j\in [n],\vspace{1ex}\\
{\bm \Delta}_{ij}^{(0)}= \omega_0 \sum_{h=0}^m \sum_{l=0}^p \mu_i(x_h) \mu_j(y_l){\bm R}_{hl}^{(0)},~i,~j\in [n],\vspace{1ex}\\
{\bm R}_{hl}^{(0)} = {\bm Q}_{hl} - \mathcal{S}^{(0)}(x_h,y_l),~h\in [m],~l\in [p],
\end{array}\right .
\end{align*}
where ${\bm P}_{ij}^{(1)}$ is the $(i,j)$th new control point, ${\bm \Delta}_{ij}^{(0)}$ is the $(i,j)$th  adjusting vector, ${\bm R}_{hl}^{(0)}$ is the $(h,l)$th difference vector, and $\omega_0$ is a step size.

In the preparation, assume that we have obtained the $k$th surface $\mathcal{S}^{(k)}(x,y)$ and computed
\begin{align}\label{eq:ALSPIA-error+iteSurface}
\left\{ \begin{array}{l}
{\bm P}_{ij}^{(k+1)} = {\bm P}_{ij}^{(k)} +  {\bm \Delta}_{ij}^{(k)},~i,~j\in [n], \vspace{1ex}\\
{\bm \Delta}_{ij}^{(k)}= \omega_k \sum_{h=0}^m \sum_{l=0}^p \mu_i(x_h) \mu_j(y_l){\bm R}_{hl}^{(k)},~i,~j\in [n], \vspace{1ex} \\
{\bm R}_{hl}^{(k)} = {\bm Q}_{hl} - \mathcal{S}^{(k)}(x_h,y_l),~h\in [m],~l\in [p],
\end{array}\right .
\end{align}
the next surface is generated by
\begin{align*}
\mathcal{S}^{(k+1)}(x,y)=\sum_{i=0}^n \sum_{j=0}^n \mu_i(x) \mu_j(y) {\bm P}_{ij}^{(k+1)}.
\end{align*}

Finally, we get a surface sequence $\left\{ \mathcal{S}^{(k)}(x,y) \right\}_{k=0}^{\infty}$, where the parameter sequence $\left\{\omega_k \right\}_{k=0}^{\infty}$ is used to speed up convergence.

\section{Convergence analyses of ALSPIA}\label{Sec:3}
In this section, we utilize matrix theory to analyze the convergence of the ALSPIA method.
\subsection{The ALSPIA method for curve fitting} Let the data points and control points be arranged respectively into
\begin{align*}
{\bm q}=\left[{\bm q}_{0}~{\bm q}_{1}~\cdots ~{\bm q}_{m}\right]^T
~{\rm and}~
{\bm p}^{(k)}=\left[ {\bm p}_{0}^{(k)}~{\bm p}_{1}^{(k)}~\cdots~{\bm p}_{n}^{(k)}\right]^T
\end{align*}
for $k=0,1,2,\cdots$. Define the collocation matrix of a system $\left(\mu_0(x)~\mu_1(x)~\cdots~\mu_n(x)\right)$ at the real increasing sequence $\left\{x_j \right\}_{j=0}^m$ as
\begin{align*}
{\bm A} = \begin{bmatrix} \mu_i(x_j)  \end{bmatrix}_{j=0,i=0}^{m,n} =
\begin{bmatrix}
\mu_0(x_0)&\mu_1(x_0)&\cdots&\mu_n(x_0)\\
\mu_0(x_1)&\mu_1(x_1)&\cdots&\mu_n(x_1)\\
\vdots&\vdots&\ddots&\vdots\\
\mu_0(x_m)&\mu_1(x_m)&\cdots&\mu_n(x_m)\\
\end{bmatrix}.
\end{align*}
Then, the ALSPIA iterative process for curve fitting can be condensed into matrix form
\begin{align}\label{ALSPIA+Iteration}
{\bm p}^{(k+1)} = {\bm p}^{(k)} + \omega_k {\bm A}^T \left({\bm q} - {\bm A} {\bm p}^{(k)} \right).
\end{align}

\begin{remark}\label{rem_singular}
 Let ${\bm \Lambda}$ be a diagonal matrix. Both LSPIAs with singular or nonsingular and ALSPIA admit the matrix form,
 \begin{align}\label{LSPIA+Iteration}
{\bm p}^{(k+1)} = {\bm p}^{(k)} + {\bm \Lambda} {\bm A}^T \left({\bm q} - {\bm A} {\bm p}^{(k)} \right)
\end{align}
for $k=0,1,\cdots$. This form gives us many flexibilities to update the control points and yield various specific instantiations. In particular, if we take ${\bm \Lambda} = \omega_k {\bm I}_{n+1}$ and ${\bm \Lambda} = \omega {\bm I}_{n+1}$, where ${\bm I}_{n+1}$ is the identity matrix with size $n+1$ and $\omega$ is a constant step size, ALSPIA and nonsingular LSPIA \cite{14DL} are obtained, respectively. In a similar way, the singular LSPIA method is recovered by setting ${\bm \Lambda}_{ii} = 1/\sum_{j\in U_i}\mu_i(x_j)$, where $U_i$ is a index set of the $i$th data point group for  $i\in[n]$, see \cite{18LCZ}.
\end{remark}
%
%
%
%

By the linear algebra theory, ${\bm p}^{\ast}$ is a least-squares solution of ${\bm A}{\bm p} = {\bm q}$ if and only if ${\bm A}^T{\bm A}{\bm p}^{\ast} = {\bm A}^T{\bm q}$. Let the symbol $\dag$ denote the Moore-Penrose  pseudoinverse \cite{97Demmel, Golub2013, Saad2003}. By the pseudoinverse identity ${\bm A}^T{\bm A}{\bm A}^{\dag}{\bm q}={\bm A}^T{\bm q}$, we know that ${\bm p}^{\ast}={\bm A}^{\dag}{\bm q}$ is one of the least-squares solution. When the normal system has multiple solutions, ${\bm p}^{\ast}$ has the least Euclidean-norm.

\textbf{Case 1:} ${\bm A}^T{\bm A}$ is singular. Let ${\be}^{(k)} = {\bm p}^{(k)} - {\bm p}^{\ast}$ for $k=0,1,2,\cdots$. It follows that
  \begin{align*}
{\bm A}^T{\bm A}{\be}^{(k+1)}
& = {\bm A}^T{\bm A}{\bm p}^{(k+1)} - {\bm A}^T{\bm A}{\bm A}^{\dag}{\bm q}\\
& = ({\bm A}^T{\bm A})\left({\bm I}_{n+1} -  \omega_{k}{\bm A}^T{\bm A} \right)\left({\bm p}^{(k)} - {\bm A}^{\dag}{\bm q}\right)\\
& = \left({\bm I}_{n+1} -  \omega_{k}{\bm A}^T{\bm A} \right)({\bm A}^T{\bm A}){\be}^{(k)}.
\end{align*}
 Iterating this recurrence,  we have
  \begin{align*}
{\bm A}^T{\bm A}{\be}^{(k+1)}
&= \prod_{\ell=0}^{k}\left({\bm I}_{n+1} - \omega_\ell {\bm A}^T{\bm A}\right)({\bm A}^T{\bm A}){\be}^{(0)}\\
&= ({\bm A}^T{\bm A})\prod_{\ell=0}^{k}\left({\bm I}_{n+1} - \omega_\ell {\bm A}^T{\bm A}\right){\be}^{(0)}\\
&:= \widetilde{P}_{k+2}({\bm A}^T{\bm A}) {\be}^{(0)}.
\end{align*}

\textbf{Case 2:} ${\bm A}^T{\bm A}$ is nonsingular. In this case, the solution ${\bm p}^{\ast} = ({\bm A}^T{\bm A})^{-1}{\bm A}^T{\bm q}$, it yields that
\begin{align*}
{\be}^{(k+1)}
& = {\bm p}^{(k+1)}- ({\bm A}^T{\bm A})^{-1}{\bm A}^T{\bm q}\\
& = {\bm p}^{(k)}+ \omega_k \left( ({\bm A}^T{\bm A}) ({\bm A}^T{\bm A})^{-1}{\bm A}^T {\bm q} - {\bm A}^T{\bm A} {\bm p}^{(k)} \right) - ({\bm A}^T{\bm A})^{-1}{\bm A}^T{\bm q} \\
& = \left({\bm I}_{n+1} -  \omega_k{\bm A}^T{\bm A} \right){\be}^{(k)}.
\end{align*}
This recurrence indicates that
\begin{align*}
{\be}^{(k+1)} = \prod_{\ell=0}^{k}\left({\bm I}_{n+1} - \omega_\ell {\bm A}^T{\bm A}\right){\be}^{(0)}:= \widehat{P}_{k+1}({\bm A}^T{\bm A}){\be}^{(0)}.
\end{align*}

General convergence results seen in  \cite{97Demmel, Golub2013, Saad2003} imply that if the spectral radius of the iteration matrix is less than unity, or equivalently,
\begin{align*}
  \lim_{k\rightarrow \infty} \rho\left(\widetilde{P}_{k}({\bm A}^T{\bm A})\right) = 0
  ~~ {\rm or} ~~
  \lim_{k\rightarrow \infty} \rho\left(\widehat{P}_{k}({\bm A}^T{\bm A})\right) = 0,
\end{align*}
where $\rho({\bm D})$ is the spectral radius of any squared matrix ${\bm D}$,  then  the iteration \eqref{ALSPIA+Iteration} converges toward the required solution and the limit curve, generated by the ALSPIA method, is the least-squares fitting result to the initial data. Next, we will discuss the convergence of the ALSPIA method divided into ${\bm A}^T{\bm A}$ being singular and nonsingular cases, where the adaptive step size is chosen based on the roots of Chebyshev polynomials (see Appendix for a brief review of the main properties of Chebyshev polynomials).

\subsubsection{Case 1: ${\bm A}^T{\bm A}$ is singular} In this case, we get the sub-linear convergence rate for ALSPIA.

\begin{theorem}\label{thm:ALSPIA-Rate1}
Let $\left\{\mu_i(x)\right\}_{i=0}^n$ be a blending basis and ${\bm A}$ be the corresponding collocation matrix on the real increasing sequence $\left\{x_j \right\}_{j=0}^m$. Suppose that the step size sequence $\left\{\omega_\ell\right\}_{\ell=0}^{k-1}$
 depends on the roots of the Chebyshev polynomial given by
 \begin{align}\label{Thm:Result1+singular}
   \omega_\ell =
   \frac{1-\cos\left(\frac{2k+1}{2(k+1)}\pi \right)}
   {v \left(\cos\left(\frac{2\ell+1}{2(k+1)}\pi \right) - \cos\left(\frac{2k+1}{2(k+1)}\pi \right) \right)},~\ell\in[k-1],
 \end{align}
for a fixed number of iterations $k$, where $v$ is the largest eigenvalue of $\bar{{\bm A}}={\bm A}^T{\bm A}$. The limit of sequence $\left\{ \mathcal{C}^{(k)}(x) \right\}_{k=0}^{\infty}$, generated by the  ALSPIA method, is the least-squares curve of the initial data $\left\{{\bm q}_j\right\}_{j=0}^m$ even though ${\bm A}$ is rank deficient. In such a case, we have the following sub-linear convergence rate
\begin{align}\label{Thm:Result2+singular}
 \rho\left(\widetilde{P}_{k+1}(\bar{{\bm A}}) \right) \leq \frac{v \pi}{2(k+1)^2},
\end{align}
where $\widetilde{P}_{k+1}(\bar{{\bm A}})=\bar{{\bm A}} \prod_{\ell=0}^{k-1}\left({\bm I}_{n+1} - \omega_\ell \bar{{\bm A}}\right)$.
\end{theorem}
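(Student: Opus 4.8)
The plan is to turn the spectral-radius bound into a scalar Chebyshev approximation estimate. Since $\bar{{\bm A}}={\bm A}^T{\bm A}$ is symmetric positive semidefinite with eigenvalues contained in $[0,v]$ and $\widetilde{P}_{k+1}(\bar{{\bm A}})=\bar{{\bm A}}\prod_{\ell=0}^{k-1}({\bm I}_{n+1}-\omega_\ell\bar{{\bm A}})$ is a matrix polynomial, its spectral radius equals $\max_\lambda|\widetilde{P}_{k+1}(\lambda)|$ over the eigenvalues $\lambda$ of $\bar{{\bm A}}$, where $\widetilde{P}_{k+1}(\lambda)=\lambda\prod_{\ell=0}^{k-1}(1-\omega_\ell\lambda)$; hence $\rho(\widetilde{P}_{k+1}(\bar{{\bm A}}))\le\max_{\lambda\in[0,v]}|\widetilde{P}_{k+1}(\lambda)|$. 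The key observation is that the leading factor $\lambda$ already annihilates the null space of $\bar{{\bm A}}$, so, unlike in the nonsingular case, one needs no lower bound on the nonzero eigenvalues and may estimate over the whole interval $[0,v]$; this is exactly what turns the classical linear rate into a sub-linear one.

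Next I would identify $\widetilde{P}_{k+1}$ with a rescaled Chebyshev polynomial. Writing $t_\ell=\cos\frac{(2\ell+1)\pi}{2(k+1)}$, $\ell=0,\dots,k$, for the zeros of $T_{k+1}$, the prescribed step size \eqref{Thm:Result1+singular} reads $\omega_\ell=(1-t_k)/(v(t_\ell-t_k))$, i.e. $1/\omega_\ell=v(t_\ell-t_k)/(1-t_k)$. Let $g(\lambda)=t_k+(1-t_k)\lambda/v$ be the affine bijection $[0,v]\to[t_k,1]$; then $g$ sends $0\mapsto t_k$ and $1/\omega_\ell\mapsto t_\ell$ for $\ell\le k-1$, so the two polynomials $\widetilde{P}_{k+1}(\lambda)$ and $T_{k+1}(g(\lambda))$, both of degree $k+1$, share the same $k+1$ zeros $\{0,1/\omega_0,\dots,1/\omega_{k-1}\}$ and therefore $\widetilde{P}_{k+1}(\lambda)=\kappa\,T_{k+1}(g(\lambda))$ for some scalar $\kappa$. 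Matching derivatives at $\lambda=0$, where $\widetilde{P}_{k+1}'(0)=1$, pins down $\kappa=v/((1-t_k)\,T_{k+1}'(t_k))$.

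Then I would carry out the estimate. On $[0,v]$ one has $|T_{k+1}(g(\lambda))|=|T_{k+1}(s)|$ with $s\in[t_k,1]\subset[-1,1]$, so $\max_{\lambda\in[0,v]}|\widetilde{P}_{k+1}(\lambda)|=|\kappa|\max_{s\in[-1,1]}|T_{k+1}(s)|=|\kappa|$. Computing $|\kappa|$ is now a bounded calculation: the identity $T_{k+1}'(\cos\theta)=(k+1)\sin((k+1)\theta)/\sin\theta$ evaluated at $\theta_k=\frac{(2k+1)\pi}{2(k+1)}$ gives $|T_{k+1}'(t_k)|=(k+1)/\sin\frac{\pi}{2(k+1)}$, while $1-t_k=1+\cos\frac{\pi}{2(k+1)}$, and the half-angle identity $\sin\alpha/(1+\cos\alpha)=\tan(\alpha/2)$ collapses this to $|\kappa|=\frac{v}{k+1}\tan\frac{\pi}{4(k+1)}$. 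Since $\tan x\le 2x$ on $(0,\pi/4]$ (because $x\mapsto\tan x/x$ increases to $4/\pi<2$ there) and $\frac{\pi}{4(k+1)}\le\frac{\pi}{8}$, this yields $|\kappa|\le\frac{v\pi}{2(k+1)^2}$, which is \eqref{Thm:Result2+singular}. For the convergence statement, subtracting ${\bm p}^\ast={\bm A}^\dag{\bm q}$ from \eqref{ALSPIA+Iteration} and using ${\bm A}^T{\bm A}{\bm A}^\dag{\bm q}={\bm A}^T{\bm q}$ gives ${\be}^{(k)}=\prod_{\ell=0}^{k-1}({\bm I}_{n+1}-\omega_\ell\bar{{\bm A}}){\be}^{(0)}$; decomposing ${\be}^{(0)}$ along the null space of ${\bm A}$ (which coincides with that of $\bar{{\bm A}}$) and its orthogonal complement, the null-space part stays fixed but is invisible to ${\bm A}$, while on the complement the scalar symbol $\prod_\ell|1-\omega_\ell\lambda|=|\widetilde{P}_{k+1}(\lambda)|/\lambda\le v\pi/(2\lambda(k+1)^2)\to 0$ for each nonzero eigenvalue $\lambda$, so ${\bm p}^{(k)}$ tends to a least-squares solution of ${\bm A}{\bm p}={\bm q}$ and the limit curve is the least-squares curve regardless of the rank of ${\bm A}$.

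The step I expect to be the main obstacle is the identification in the second paragraph: recognizing that the prescribed $\omega_\ell$ are precisely the reciprocals that rescale $\prod_\ell(1-\omega_\ell\lambda)$ into $T_{k+1}$, fixing the constant $\kappa$ correctly through $\widetilde{P}_{k+1}'(0)=1$, and realizing that the mandatory zero at $0$ (forced by the $\bar{{\bm A}}$ prefactor) is what simultaneously handles the rank deficiency and accounts for the $O(k^{-2})$ sub-linear price. Everything after that is routine manipulation of Chebyshev identities.
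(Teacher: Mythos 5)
Your proof is correct and follows essentially the same route as the paper: both reduce the spectral radius to the scalar least-deviation problem on $[0,v]$ under the normalization $\widetilde{P}_{k+1}(0)=0$, $\widetilde{P}_{k+1}'(0)=1$, identify $\widetilde{P}_{k+1}$ with the Chebyshev polynomial affinely shifted so that its root $\cos\left(\tfrac{2k+1}{2(k+1)}\pi\right)$ lands at $\lambda=0$, and evaluate $|P_{k+1}'|$ at that root. Your only departures are presentational --- you start from the prescribed $\omega_\ell$ and match zeros rather than constructing the optimal polynomial first and reading off the step sizes at the end --- plus one small improvement: the exact evaluation $|\kappa|=\tfrac{v}{k+1}\tan\tfrac{\pi}{4(k+1)}$ together with $\tan x\le 2x$ makes the final estimate rigorous for every $k$, whereas the paper passes through the asymptotic $\sin(\pi/(2k+2))\sim\pi/(2k+2)$.
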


\begin{proof}
Since $\bar{{\bm A}}$ is singular, the spectrum of $\bar{{\bm A}}$ satisfies that
 \begin{align*}
0\leq \lambda_{i}(\bar{{\bm A}}) \leq\lambda_{\max}(\bar{{\bm A}})=v<\infty,
\end{align*}
where $\lambda_{i}(\bar{{\bm A}})$ be the $i$th eigenvalue of $\bar{{\bm A}}$ for $i=1,2,\cdots,n+1$. It follows that
\begin{align*}
  \rho\left(\widetilde{P}_{k+1}(\bar{{\bm A}})\right)= \max_{i=1,2,\cdots,n+1}\left| \widetilde{P}_{k+1}(\lambda_{i})  \right|
  \leq \max_{\lambda\in [0,~v]}\left| \widetilde{P}_{k+1}(\lambda)  \right|,
\end{align*}
where
\begin{align}\label{eq:tildePk}
\widetilde{P}_{k+1}(\lambda) = \lambda \prod_{\ell=0}^{k-1}\left(1 - \omega_\ell \lambda \right)
\end{align}
is the polynomial of degree $k + 1$. Then, we aim at selecting the weights $\omega_\ell$ for $\ell\in[k-1]$ such that $\widetilde{P}_{k+1}(\lambda)$ has the least deviation from $0$ on the interval $[0,~v]$ and satisfying
\begin{align*}
 \widetilde{P}_{k+1}(0)=0 ~~ {\rm and} ~~ \widetilde{P}_{k+1}^{'}(0)=1,
\end{align*}
which can be given in terms of a Chebyshev polynomial.

Let $P_{k+1}(x)$ be the first kind of Chebyshev polynomial of degree $k+1$ whose the closest root to $1$ is
\begin{align*}
x_{k+1} = \cos\left(\frac{2k+1}{2(k+1)} \pi \right) = \cos\left(\pi - \frac{1}{2(k+1)} \pi \right) < 0.
\end{align*}
Define the polynomial
\begin{align*}
\widetilde{P}_{k+1}(\lambda) =
\frac{v}{1-x_{k+1}}
\frac{P_{k+1}\left(x_{k+1} + \frac{1-x_{k+1}}{v}\lambda \right)}{P_{k+1}^{'}(x_{k+1})}.
\end{align*}
It is easy to check that
\begin{align*}
\widetilde{P}_{k+1}(0) =
\frac{v}{1-x_{k+1}}
\frac{P_{k+1}\left(x_{k+1} \right)}{P_{k+1}^{'}(x_{k+1})} = 0
\end{align*}
and
\begin{align*}
\widetilde{P}_{k+1}^{'}(0) =
\frac{v}{1-x_{k+1}} \frac{1-x_{k+1}}{v}
\frac{P_{k+1}^{'}(x_{k+1})}{P_{k+1}^{'}(x_{k+1})} = 1.
\end{align*}
According to this choice of $\widetilde{P}_k(\lambda)$, it follows that
\begin{align*}
\max_{\lambda\in [0,~v]}\left| \widetilde{P}_{k+1}(\lambda)  \right|
 = \max_{\lambda\in [0,~v]}\left| \frac{v}{1-x_{k+1}}
\frac{P_{k+1}\left(x_{k+1} + \frac{1-x_{k+1}}{v}\lambda \right)}{P_{k+1}^{'}(x_{k+1})}  \right|
 \leq \frac{v}{\left| P_{k+1}^{'}(x_{k+1}) \right|},
\end{align*}
where the inequality is from Lemma \ref{lemma:Chebyshev+Demmel}.  Moreover, from the property of Chebyshev polynomial, we know that, for any $\theta$,
\begin{align*}
P_{k+1}(\cos(\theta)) = \cos((k+1)\theta).
\end{align*}
Differentiating on both sides yields that
\begin{align*}
\sin(\theta) P_{k+1}^{'}(\cos(\theta)) = (k+1) \sin((k+1)\theta).
\end{align*}
When $\theta = \pi - \pi/(2k+2)$, we have
 \begin{align*}
 \left| P_{k+1}^{'}(x_{k+1}) \right|
 &= \frac{(k+1)|\sin((k+1)\pi-\pi/2)|}{|\sin(\pi-\pi/(2k+2))|}\\
 &= \frac{k+1}{|\sin(\pi-\pi/(2k+2))|}\\
 &= \frac{2(k+1)^2}{\pi},
 \end{align*}
where the last line is from the fact that $\sin(\pi-\pi/(2k+2))\sim \pi/(2k+2)$  when $k$ is sufficiently large. Then we can straightforwardly obtain the estimate
\begin{align*}
\rho\left( \widetilde{P}_{k+1}(\bar{{\bm A}}) \right)
\leq \frac{v}{\left| P_{k+1}^{'}(x_{k+1}) \right|}
\leq \frac{v \pi}{2(k+1)^2}.
\end{align*}
Formula \eqref{eq:tildePk} shows that the zeros of polynomial $\widetilde{P}_{k+1}(\lambda)$ are $\lambda = 1/\omega_\ell$,  for $\ell\in[k-1]$. Hence, the optimal step sizes (for the approximation problem, simplified to on interval) are given by the inverse roots of $\widetilde{P}_{k+1}(\lambda)$ \cite[Chapter 1]{2014Maxim}. That is,
\begin{align*}
x_{k+1} + \frac{1-x_{k+1}}{v}\omega_{\ell}^{-1} = \cos\left(\frac{2\ell+1}{2(k+1)}\pi\right).
\end{align*}
Therefore, we obtain the statement in \eqref{Thm:Result1+singular}. \hfill
\end{proof}

\subsubsection{Case 2: ${\bm A}^T{\bm A}$ is nonsingular} In this case, we get the following linear convergence rate for ALSPIA.

\begin{theorem}\label{thm:ALSPIA-Rate2}
Let $\left\{\mu_i(x)\right\}_{i=0}^n$ be a blending basis and ${\bm A}$ be the corresponding collocation matrix on the real increasing sequence $\left\{x_j \right\}_{j=0}^m$. Suppose that the step size sequence $\left\{\omega_\ell\right\}_{\ell=0}^{k-1}$
 depends on the roots of the Chebyshev polynomial given by
 \begin{align}\label{Thm:Result1+nonsingular}
  \omega_\ell = 2\left( (v + u) + (v - u) \cos\left(\frac{2\ell+1}{2k}\pi\right)\right)^{-1},~\ell\in[k-1],
 \end{align}
for a fixed number of iterations $k$, where $v$ and $u$ are the largest and the smallest eigenvalues of $\bar{{\bm A}}$, respectively. The limit of sequence $\left\{ \mathcal{C}^{(k)}(x) \right\}_{k=0}^{\infty}$, generated by the  ALSPIA method, is the least-squares curve of the initial data $\left\{{\bm q}_j\right\}_{j=0}^m$ when ${\bm A}$ is of full rank. In such a case, we have the following linear convergence rate
\begin{align}\label{Thm:Result2+nonsingular}
 \rho\left( \widehat{P}_k(\bar{{\bm A}}) \right) \leq  2\left(\frac{\sqrt{v}-\sqrt{u}}{\sqrt{v}+\sqrt{u}}\right)^k,
\end{align}
where $\widehat{P}_k(\bar{{\bm A}})= \prod_{\ell=0}^{k-1}\left({\bm I}_{n+1} - \omega_\ell \bar{{\bm A}}\right)$.
\end{theorem}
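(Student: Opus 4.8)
The plan is to follow the strategy of Theorem~\ref{thm:ALSPIA-Rate1}, now exploiting that the spectrum of $\bar{{\bm A}}={\bm A}^T{\bm A}$ lies in a \emph{positive} interval $[u,v]$ with $0<u\le v$. Starting from the recurrence ${\be}^{(k)}=\widehat{P}_k(\bar{{\bm A}}){\be}^{(0)}$ derived just before the statement and diagonalising $\bar{{\bm A}}$, one has
\begin{align*}
\rho\bigl(\widehat{P}_k(\bar{{\bm A}})\bigr)=\max_{i=1,\dots,n+1}\bigl|\widehat{P}_k(\lambda_i)\bigr|\le\max_{\lambda\in[u,v]}\bigl|\widehat{P}_k(\lambda)\bigr|,\qquad\widehat{P}_k(\lambda)=\prod_{\ell=0}^{k-1}\bigl(1-\omega_\ell\lambda\bigr),
\end{align*}
where $\widehat{P}_k$ is a polynomial of degree $k$ with $\widehat{P}_k(0)=1$, and the $\omega_\ell$'s are free parameters fixing precisely its $k$ roots $1/\omega_\ell$. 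Thus the task reduces to choosing these roots so that $\widehat{P}_k$ has least deviation from $0$ on $[u,v]$ under the normalisation $\widehat{P}_k(0)=1$; as in the singular case, this is answered by a shifted and scaled Chebyshev polynomial.

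Concretely, I would introduce
\begin{align*}
\widehat{P}_k(\lambda)=\frac{P_k\!\left(\frac{v+u-2\lambda}{v-u}\right)}{P_k\!\left(\frac{v+u}{v-u}\right)},
\end{align*}
with $P_k$ the first-kind Chebyshev polynomial of degree $k$; it satisfies $\widehat{P}_k(0)=1$ since the argument equals $(v+u)/(v-u)$ at $\lambda=0$, and since it has the same $k$ roots and the same value at $0$ as $\prod_{\ell}(1-\omega_\ell\lambda)$, the two polynomials coincide. Reading off the roots from $P_k(\cos\phi)=\cos(k\phi)$ gives $\frac{v+u-2\lambda}{v-u}=\cos\!\bigl(\frac{2\ell+1}{2k}\pi\bigr)$ for $\ell\in[k-1]$; solving this linear equation for $\lambda=1/\omega_\ell$ produces \eqref{Thm:Result1+nonsingular}. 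Here I would remark that $\{\cos(\frac{2\ell+1}{2k}\pi):\ell\in[k-1]\}$ is symmetric about $0$, because $\frac{2(k-1-\ell)+1}{2k}\pi=\pi-\frac{2\ell+1}{2k}\pi$, so the collection $\{\omega_\ell\}$ is unchanged if $\cos$ is replaced by $-\cos$, which is why the formula may equivalently be written with a plus sign.

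It then remains to bound $\max_{\lambda\in[u,v]}|\widehat{P}_k(\lambda)|$. Since $\lambda\mapsto(v+u-2\lambda)/(v-u)$ maps $[u,v]$ onto $[-1,1]$ and $|P_k|\le1$ there by Lemma~\ref{lemma:Chebyshev+Demmel}, we get $\max_{\lambda\in[u,v]}|\widehat{P}_k(\lambda)|\le1/\bigl|P_k\!\left((v+u)/(v-u)\right)\bigr|$. To estimate the denominator I would use the substitution $\frac{v+u}{v-u}=\frac12\bigl(t+t^{-1}\bigr)$ with $t=\frac{\sqrt v+\sqrt u}{\sqrt v-\sqrt u}>1$ (a one-line check after clearing denominators), together with $P_k\!\bigl(\frac12(t+t^{-1})\bigr)=\frac12(t^k+t^{-k})$, which follows from $P_k(\cosh\theta)=\cosh(k\theta)$ on taking $t=e^{\theta}$. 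Hence
\begin{align*}
\rho\bigl(\widehat{P}_k(\bar{{\bm A}})\bigr)\le\frac{2}{t^k+t^{-k}}\le\frac{2}{t^k}=2\left(\frac{\sqrt v-\sqrt u}{\sqrt v+\sqrt u}\right)^{\!k},
\end{align*}
which is \eqref{Thm:Result2+nonsingular}. Since $t>1$ this bound tends to $0$ as $k\to\infty$, so together with ${\be}^{(k)}=\widehat{P}_k(\bar{{\bm A}}){\be}^{(0)}$ and the uniqueness of the full-rank least-squares solution ${\bm p}^{\ast}=(\bar{{\bm A}})^{-1}{\bm A}^T{\bm q}$, we conclude ${\bm p}^{(k)}\to{\bm p}^{\ast}$, i.e.\ $\mathcal{C}^{(k)}(x)$ converges to the least-squares fitting curve.

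I expect the only subtleties to be bookkeeping rather than conceptual. First, one should justify that the displayed $\widehat{P}_k$ is the right object: either invoke the standard equioscillation/uniqueness result characterising Chebyshev polynomials as minimisers, or — since the theorem requires only an upper bound — simply note that this particular admissible choice already attains the claimed estimate. Second, the algebra turning the root condition into the precise expression \eqref{Thm:Result1+nonsingular} is elementary but fussy, including the sign reconciliation above. Third, one must check that all the quantities are well defined in the full-rank case ($u>0$, hence $t$ finite and $>1$, and $\bar{{\bm A}}$ invertible); none of this is available when $\bar{{\bm A}}$ is singular, which is precisely why Theorem~\ref{thm:ALSPIA-Rate1} had to settle for a sub-linear rate with $u$ effectively replaced by $0$.
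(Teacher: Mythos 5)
Your proposal is correct and follows essentially the same route as the paper: bound the spectral radius by the max of $|\widehat{P}_k|$ over $[u,v]$, realise $\widehat{P}_k$ as the shifted-and-scaled Chebyshev polynomial normalised at $0$ (Lemma \ref{lemma:Chebyshev2}), read off the step sizes as inverse roots, and evaluate $P_k\left(\frac{v+u}{v-u}\right)$ via the identity of Lemma \ref{lemma:Chebyshev+Demmel}(5). The only differences are cosmetic — your opposite sign convention in the affine map (correctly reconciled via the symmetry of the cosine set) and the direct substitution $t=\frac{\sqrt v+\sqrt u}{\sqrt v-\sqrt u}$ in place of the paper's computation of $s\pm\sqrt{s^2-1}$ with $t=u/v$, which yield the same lower bound on the denominator.
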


\begin{proof}
Assume that the spectrum of $\bar{{\bm A}}$ satisfies that
 \begin{align*}
0<u=\lambda_{\min}(\bar{{\bm A}}) \leq \lambda_{i}(\bar{{\bm A}}) \leq\lambda_{\max}(\bar{{\bm A}})=v<\infty,~i=1,2,\cdots,n+1.
\end{align*}
Then, we have the following bound
\begin{align*}
  \rho\left( \widehat{P}_k(\bar{{\bm A}}) \right)
  = \max_{i=1,2,\cdots,n+1}\left| \widehat{P}_k(\lambda_{i})  \right|
  \leq \max_{\lambda\in [u,~v]}\left| \widehat{P}_k(\lambda)  \right|.
\end{align*}
In such way, we can choose the asynchronous sizes $\omega_\ell$ for $\ell\in[k-1]$ such that $\widehat{P}_k(\lambda)$ is the polynomial least deviation from zero on $[u,~v]$, which satisfies that $\widehat{P}_k(0)=1$. From Lemma \ref{lemma:Chebyshev2}, it yields that
\begin{align*}
\max_{\lambda\in [u,~v]}\left| \widehat{P}_k(\lambda)  \right|
=
\max_{\lambda\in [u,~v]}\left|
\frac{P_{k}\left( \frac{2\lambda}{v-u} -  \frac{v+u}{v-u}\right)}{P_{k}\left( -  \frac{v+u}{v-u}\right)}
\right|
=\frac{1}{\left|P_{k}\left( -  \frac{v+u}{v-u}\right)\right|}
=\frac{1}{P_{k}\left(\frac{v+u}{v-u}\right)},
\end{align*}
where $P_{k}(x)$ is the $k$th Chebyshev polynomial. Let $s= (v+u)/(v-u) = (1+t)/(1-t)>1$ with $t=u/v\in(0,~1]$. We have
\begin{align*}
  s^2 - 1 = \frac{(1+t)^2 - (1-t)^2}{(1-t)^2} = \frac{4t}{(1-t)^2}.
\end{align*}
An elementary computation shows that
\begin{align*}
 s + \sqrt{s^2 - 1} = \frac{1+t+2\sqrt{t}}{1-t} = \frac{(1+\sqrt{t})^2}{1-t} = \frac{1+\sqrt{t}}{1-\sqrt{t}}
\end{align*}
and
\begin{align*}
 s - \sqrt{s^2 - 1} = \frac{1+t-2\sqrt{t}}{1-t} = \frac{(1-\sqrt{t})^2}{1-t} = \frac{1-\sqrt{t}}{1+\sqrt{t}}.
\end{align*}
From Lemma \ref{lemma:Chebyshev+Demmel}, we know that
\begin{align*}
 P_{k}\left(\frac{v+u}{v-u}\right)
 & = P_{k}\left(s \right) = \frac{1}{2}\left[ (s+\sqrt{s^2-1})^k + (s+\sqrt{s^2-1})^{-k} \right]\\
 & = \frac{1}{2} \left[ \left( \frac{1+\sqrt{t}}{1-\sqrt{t}}  \right)^k + \left( \frac{1-\sqrt{t}}{1+\sqrt{t}} \right)^k  \right]\\
 & \geq \frac{1}{2} \left( \frac{1+\sqrt{t}}{1-\sqrt{t}} \right)^k.
\end{align*}
It follows that
\begin{align*}
  \rho\left( \widehat{P}_k(\bar{{\bm A}}) \right)
  \leq \max_{\lambda\in [u,~v]}\left| \widehat{P}_k(\lambda)  \right|
  \leq 2\left(\frac{1-\sqrt{t}}{1+\sqrt{t}}\right)^k
  =  2\left(\frac{\sqrt{v}-\sqrt{u}}{\sqrt{v}+\sqrt{u}}\right)^k.
\end{align*}
Correspondingly, the step sizes $\omega_{\ell}$, for $\ell\in[k-1]$, are chosen as the inverse roots of polynomial $\widetilde{P}_k(\lambda)$. That is,
\begin{align*}
\frac{2\omega_{\ell}^{-1}}{v-u} -  \frac{v+u}{v-u} = \cos\left(\frac{2\ell+1}{2k}\pi\right),
\end{align*}
which yields the expression in formula \eqref{Thm:Result1+nonsingular} immediately. \hfill
\end{proof}

\begin{remark}
Formulas \eqref{Thm:Result1+singular} and \eqref{Thm:Result1+nonsingular} give the sets of the step sizes $\omega_{\ell}$ with $\ell\in[k-1]$ for a fixed $k$. The order, where $\omega_{\ell}$ is used, does not matter. In practice, however, this can be important, and the natural ordering from \eqref{Thm:Result1+singular} or \eqref{Thm:Result1+nonsingular} may lead to an amplification of roundoff error. It is interesting and may be the subject of a new paper. In this work, we do not consider the influence of the order of $\omega_\ell$ and choose it in a cycle fashion. Several re-ordering methods leading to more stable calculations have been proposed in the literature; see, e.g., \cite{71LF}.
\end{remark}

\begin{remark}
Deng and Lin (\cite[Theorem 3.1]{14DL}) have succeeded to establish an upper bound on the step size that guarantees the convergence of LSPIA. When the optimal step size is given by $\omega_{\rm opt} = 2/(v + u)$, LSPIA has the smallest possible convergence rate  $\rho_{\textsc{lspia}}=(v - u)/(v + u)$ for the case of curve fitting. When ${\bm A}$ is of full-column rank, the matrix $\bar{{\bm A}}$ has all positive eigenvalues then $u/v \in (0,1]$ which means that $\rho_{\textsc{lspia}}\in [0,1)$. Then, LSPIA is guaranteed to converge. In addition, we can see that the convergence rate, given by Theorem \ref{thm:ALSPIA-Rate2}, is better than $\rho_{\textsc{lspia}}$ for a fixed $k$. Note also that the convergence rate from Theorem \ref{thm:ALSPIA-Rate2} is the same as that for the conjugate gradient method; see, for example, formula 6.128 in \cite{Saad2003}, and it is optimal for this class of iterative schemes.
\end{remark}

\begin{remark}
The ALSPIA method with Chebyshev-based step size belongs to the class of Chebyshev semi-iterative methods \cite{61GV}, however, which to the best of our knowledge has not been previously used to accelerate the convergence rate of LSPIA \cite{14DL}. Some other accelerations of LSPIA have been proposed, e.g., in \cite{19EL, 20HW}. Take the curve fitting situation as an example. In \cite{19EL}, the Schulz composite iterative procedure   is applied to update the  adjusting vector. In \cite[Equation (3)]{20HW}, by introducing three real weights, the LSPIA with memory method needs two additional storing data of the previous step and three more scalar multiplications at each iteration to calculate the adjusting vector, its optimal convergence rate was presented in \cite[Theorem 6]{20HW}.
\end{remark}

\subsection{The ALSPIA method for surface fitting}  In this case, we put all the coordinates of data point and control point into a row partition, which are arranged as follows.
\begin{align*}
{\bm Q} = \left[{\bm Q}_{00}~\cdots~{\bm Q}_{m0}~{\bm Q}_{01}~\cdots~{\bm Q}_{m1}~ \cdots ~{\bm Q}_{0p}~\cdots~{\bm Q}_{mp}~\right]^T,
\end{align*}
and
\begin{align*}
{\bm P}^{(k)} = \left[{\bm P}_{00}^{(k)}~\cdots~{\bm P}_{n0}^{(k)}~{\bm P}_{01}^{(k)}~\cdots~{\bm P}_{n1}^{(k)}~ \cdots ~{\bm P}_{0n}^{(k)}~\cdots~{\bm P}_{nn}^{(k)}~  \right]^T
\end{align*}
for $k=0,1,2,\cdots$. Define another collocation matrix of a system $\left(\mu_0(y)~\mu_1(y)~\cdots~\mu_n(y)\right)$ at the real increasing sequence $\left\{y_l \right\}_{l=0}^p$ as
\begin{align*}
{\bm B}^T = \begin{bmatrix} \mu_j(y_l)  \end{bmatrix}_{l=0,j=0}^{p,n} =
\begin{bmatrix}
\mu_0(y_0)&\mu_1(y_0)&\cdots&\mu_n(y_0)\\
\mu_0(y_1)&\mu_1(y_1)&\cdots&\mu_n(y_1)\\
\vdots&\vdots&\ddots&\vdots\\
\mu_0(y_p)&\mu_1(y_p)&\cdots&\mu_n(y_p)\\
\end{bmatrix}.
\end{align*}
Observe that formula \eqref{eq:ALSPIA-error+iteSurface} enables us to express the iteration of ALSPIA surface fitting in the vectorized form
\begin{align}\label{ALSPIA+Iteration+surface}
{\bm P}^{(k+1)} = {\bm P}^{(k)} + \omega_k \left( {\bm B} \otimes {\bm A}^T \right) \left( {\bm Q} - \left( {\bm B}^T \otimes {\bm A} \right) {\bm P}^{(k)} \right).
\end{align}
It indicates that the approach, used in the case of curve fitting, carries over well to this case. Then, we can get the following convergence results for ALSPIA surface fitting.

\begin{corollary}\label{corollary:ALSPIA-Rate1}
Let $\left\{\mu_i(x)\right\}_{i=0}^n$ and $\left\{\mu_j(y)\right\}_{j=0}^n$ be two blending bases, and ${\bm A}$ and ${\bm B}^T$ be the corresponding collocation matrices on the real increasing sequences $\left\{x_h \right\}_{h=0}^m$ and $\left\{y_l \right\}_{l=0}^p$, respectively. Suppose that the step size sequence $\left\{\omega_\ell\right\}_{\ell=0}^{k-1}$
 depends on the roots of the Chebyshev polynomial given by
 \begin{align*}
   \omega_\ell =
   \frac{1-\cos\left(\frac{2k+1}{2(k+1)}\pi \right)}
   {\widetilde{v} \left(\cos\left(\frac{2\ell+1}{2(k+1)}\pi \right) - \cos\left(\frac{2k+1}{2(k+1)}\pi \right) \right)},~\ell\in[k-1],
 \end{align*}
for a fixed number of iterations $k$, where $\widetilde{v}$ is the largest eigenvalue of $\bar{\bar{{\bm A}}}=({\bm B}^T \otimes {\bm A})^T({\bm B}^T \otimes {\bm A})$. The limit of sequence $\left\{ \mathcal{S}^{(k)}(x,y) \right\}_{k=0}^{\infty}$, generated by the  ALSPIA method, is the least-squares surface of the initial data $\left\{{\bm Q}_{hl} \right\}_{h=0,l=0}^{m,p}$ even though ${\bm B}^T \otimes {\bm A}$ is rank deficient. In such a case, we have the following sub-linear convergence rate
\begin{align*}
 \rho\left(\widetilde{P}_{k+1}\left(\bar{\bar{{\bm A}}}\right) \right) \leq \frac{\widetilde{v} \pi}{2(k+1)^2},
\end{align*}
where $\widetilde{P}_{k+1}\left(\bar{\bar{{\bm A}}}\right)=\bar{\bar{{\bm A}}} \prod_{\ell=0}^{k-1}\left({\bm I}_{(n+1)^2} - \omega_\ell \bar{\bar{{\bm A}}}\right)$.
\end{corollary}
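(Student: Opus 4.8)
The plan is to reduce Corollary \ref{corollary:ALSPIA-Rate1} to Theorem \ref{thm:ALSPIA-Rate1}, exploiting the fact that the vectorized surface iteration \eqref{ALSPIA+Iteration+surface} has exactly the structure of the curve iteration \eqref{ALSPIA+Iteration}. First I would set ${\bm M} := {\bm B}^T \otimes {\bm A}$; by the Kronecker transpose identity $({\bm B}^T \otimes {\bm A})^T = {\bm B} \otimes {\bm A}^T$, formula \eqref{ALSPIA+Iteration+surface} becomes ${\bm P}^{(k+1)} = {\bm P}^{(k)} + \omega_k {\bm M}^T\big({\bm Q} - {\bm M}{\bm P}^{(k)}\big)$, which is \eqref{ALSPIA+Iteration} with $({\bm A},{\bm q},{\bm p}^{(k)})$ replaced by $({\bm M},{\bm Q},{\bm P}^{(k)})$ and the ambient dimension enlarged from $n+1$ to $(n+1)^2$.

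Next I would transcribe the algebra of Case 1 verbatim. Writing ${\bm P}^{\ast} = {\bm M}^{\dag}{\bm Q}$ for the minimum-norm least-squares solution of ${\bm M}{\bm P} = {\bm Q}$ and ${\bm E}^{(k)} = {\bm P}^{(k)} - {\bm P}^{\ast}$, the pseudoinverse identity ${\bm M}^T{\bm M}{\bm M}^{\dag}{\bm Q} = {\bm M}^T{\bm Q}$ gives the recurrence ${\bm M}^T{\bm M}{\bm E}^{(k+1)} = ({\bm I}_{(n+1)^2} - \omega_k\bar{\bar{{\bm A}}})\,\bar{\bar{{\bm A}}}\,{\bm E}^{(k)}$ with $\bar{\bar{{\bm A}}} = {\bm M}^T{\bm M} = ({\bm B}{\bm B}^T)\otimes({\bm A}^T{\bm A})$, and iterating yields ${\bm M}^T{\bm M}{\bm E}^{(k)} = \widetilde{P}_{k+1}(\bar{\bar{{\bm A}}})\,{\bm E}^{(0)}$ with $\widetilde{P}_{k+1}(\bar{\bar{{\bm A}}}) = \bar{\bar{{\bm A}}}\prod_{\ell=0}^{k-1}({\bm I}_{(n+1)^2} - \omega_\ell\bar{\bar{{\bm A}}})$, precisely the matrix in the statement. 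The matrix $\bar{\bar{{\bm A}}}$, being of the form ${\bm M}^T{\bm M}$ (equivalently the Kronecker product of the two symmetric positive semidefinite matrices ${\bm B}{\bm B}^T$ and ${\bm A}^T{\bm A}$), is real symmetric positive semidefinite, hence orthogonally diagonalizable with spectrum in $[0,\widetilde v]$, where $\widetilde v = \lambda_{\max}({\bm B}{\bm B}^T)\,\lambda_{\max}({\bm A}^T{\bm A})$ is its largest eigenvalue. Therefore $\rho(\widetilde{P}_{k+1}(\bar{\bar{{\bm A}}})) \le \max_{\lambda\in[0,\widetilde v]}|\widetilde{P}_{k+1}(\lambda)|$, and the scalar extremal problem — choosing $\omega_\ell$ so that $\lambda\prod_{\ell}(1 - \omega_\ell\lambda)$ deviates least from $0$ on $[0,\widetilde v]$ subject to value $0$ and derivative $1$ at the origin — is word-for-word the one solved in the proof of Theorem \ref{thm:ALSPIA-Rate1} with $v$ replaced by $\widetilde v$. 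This produces both the Chebyshev-root step sizes displayed in the corollary and the bound $\rho(\widetilde{P}_{k+1}(\bar{\bar{{\bm A}}})) \le \widetilde v\pi/(2(k+1)^2)$; since this tends to $0$ as $k\to\infty$, the $k$-step ALSPIA surface sequence converges to the least-squares fitting surface even when ${\bm B}^T\otimes{\bm A}$ is rank deficient.

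The main obstacle is bookkeeping rather than new mathematics: one must check that every step of Case 1 — the pseudoinverse identity, the commutation of $\bar{\bar{{\bm A}}}$ with the polynomial factors, and the reduction to a scalar problem on the spectrum — uses only that $\bar{\bar{{\bm A}}}$ is symmetric positive semidefinite, and nothing particular about ${\bm A}^T{\bm A}$; once this is observed the proof is a transcription. The one extra ingredient worth recording explicitly is the Kronecker identity $({\bm B}^T\otimes{\bm A})^T({\bm B}^T\otimes{\bm A}) = ({\bm B}{\bm B}^T)\otimes({\bm A}^T{\bm A})$ together with the factorization $\widetilde v = \lambda_{\max}({\bm B}{\bm B}^T)\,\lambda_{\max}({\bm A}^T{\bm A})$, which makes the step-size formula computable without forming $\bar{\bar{{\bm A}}}$.
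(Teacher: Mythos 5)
Your proposal is correct and matches the paper's intent exactly: the paper gives no separate proof of this corollary, stating only that the vectorized iteration \eqref{ALSPIA+Iteration+surface} lets the curve-fitting argument of Theorem \ref{thm:ALSPIA-Rate1} carry over with ${\bm A}$ replaced by ${\bm B}^T\otimes{\bm A}$, which is precisely your reduction. Your explicit recording of the identity $({\bm B}^T\otimes{\bm A})^T({\bm B}^T\otimes{\bm A})=({\bm B}{\bm B}^T)\otimes({\bm A}^T{\bm A})$ and the factorization of $\widetilde{v}$ is a harmless (and useful) addition beyond what the paper writes down.
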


\begin{corollary}\label{corollary:ALSPIA-Rate2}
Let $\left\{\mu_i(x)\right\}_{i=0}^n$ and $\left\{\mu_j(y)\right\}_{j=0}^n$ be two blending bases, and ${\bm A}$ and ${\bm B}^T$ be the corresponding collocation matrices on the real increasing sequences $\left\{x_h \right\}_{h=0}^m$ and $\left\{y_l \right\}_{l=0}^p$, respectively.  Suppose that the step size sequence $\left\{\omega_\ell\right\}_{\ell=0}^{k-1}$
 depends on the roots of the Chebyshev polynomial given by
 \begin{align*}
  \omega_\ell = 2\left( (\widehat{v} + \widehat{u}) + (\widehat{v} - \widehat{u}) \cos\left(\frac{2\ell+1}{2k}\pi\right)\right)^{-1},~\ell\in[k-1],
 \end{align*}
for a fixed number of iterations $k$, where $\widehat{v}$ and $\widehat{u}$ are the largest and the smallest eigenvalues of $\bar{\bar{{\bm A}}}$, respectively. The limit of sequence $\left\{ \mathcal{S}^{(k)}(x,y) \right\}_{k=0}^{\infty}$, generated by the  ALSPIA method, is the least-squares surface of the initial data $\left\{{\bm Q}_{hl} \right\}_{h=0,l=0}^{m,p}$ when ${\bm B}^T \otimes {\bm A}$ is of full rank. In such a case, we have the following linear convergence rate
\begin{align*}
 \rho\left( \widehat{P}_k\left(\bar{\bar{{\bm A}}}\right) \right) \leq 2\left(\frac{\sqrt{\widehat{v}}-\sqrt{\widehat{u}}}{\sqrt{\widehat{v}}+\sqrt{\widehat{u}}}\right)^k,
\end{align*}
where $\widehat{P}_k\left(\bar{\bar{{\bm A}}}\right)= \prod_{\ell=0}^{k-1}\left({\bm I}_{(n+1)^2} - \omega_\ell \bar{\bar{{\bm A}}}\right)$.
\end{corollary}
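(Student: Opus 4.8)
The plan is to observe that the surface iteration \eqref{ALSPIA+Iteration+surface} is nothing but the curve iteration \eqref{ALSPIA+Iteration} with the collocation matrix ${\bm A}$ replaced by the Kronecker product ${\bm M} := {\bm B}^T \otimes {\bm A}$, so that Theorem \ref{thm:ALSPIA-Rate2} and its proof transfer essentially verbatim. First I would record the identity ${\bm B} \otimes {\bm A}^T = ({\bm B}^T \otimes {\bm A})^T = {\bm M}^T$, which lets me rewrite \eqref{ALSPIA+Iteration+surface} as
\begin{align*}
{\bm P}^{(k+1)} = {\bm P}^{(k)} + \omega_k {\bm M}^T\left( {\bm Q} - {\bm M}{\bm P}^{(k)} \right),
\end{align*}
formally identical to \eqref{ALSPIA+Iteration}. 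Hence $\bar{\bar{{\bm A}}} = {\bm M}^T{\bm M}$ plays the role of $\bar{{\bm A}} = {\bm A}^T{\bm A}$, and the error recursion of Case 2 in Section \ref{Sec:3} applies word for word: setting ${\bm e}^{(k)} := {\bm P}^{(k)} - {\bm P}^{\ast}$ with ${\bm P}^{\ast} = {\bm M}^{\dag}{\bm Q}$, one obtains ${\bm e}^{(k)} = \widehat{P}_k(\bar{\bar{{\bm A}}}){\bm e}^{(0)}$, where $\widehat{P}_k(\bar{\bar{{\bm A}}}) = \prod_{\ell=0}^{k-1}\left( {\bm I}_{(n+1)^2} - \omega_\ell \bar{\bar{{\bm A}}} \right)$.

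Next I would verify that $\bar{\bar{{\bm A}}}$ is symmetric positive definite. Because a Kronecker product has full column rank exactly when both factors do, the hypothesis that ${\bm B}^T \otimes {\bm A}$ is of full rank amounts to ${\bm A}$ and ${\bm B}^T$ both having full column rank; then $\bar{\bar{{\bm A}}} = ({\bm B}{\bm B}^T) \otimes ({\bm A}^T{\bm A})$ is a Kronecker product of two positive definite matrices, hence positive definite, with spectrum contained in $[\widehat{u},\,\widehat{v}]$ where $0 < \widehat{u} = \lambda_{\min}(\bar{\bar{{\bm A}}}) \le \lambda_{\max}(\bar{\bar{{\bm A}}}) = \widehat{v}$. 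One may further note that the eigenvalues of $\bar{\bar{{\bm A}}}$ are the pairwise products of those of ${\bm B}{\bm B}^T$ and ${\bm A}^T{\bm A}$, but only the two extreme values enter the estimate.

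From here the argument is a line-by-line copy of the proof of Theorem \ref{thm:ALSPIA-Rate2}: I would bound $\rho(\widehat{P}_k(\bar{\bar{{\bm A}}})) = \max_i |\widehat{P}_k(\lambda_i)| \le \max_{\lambda \in [\widehat{u},\widehat{v}]} |\widehat{P}_k(\lambda)|$, take $\widehat{P}_k$ to be the Chebyshev polynomial shifted and scaled from $[\widehat{u},\widehat{v}]$ to $[-1,1]$ and normalized so that $\widehat{P}_k(0) = 1$, invoke Lemma \ref{lemma:Chebyshev2} to evaluate the min-max value as $1/P_k\!\left((\widehat{v}+\widehat{u})/(\widehat{v}-\widehat{u})\right)$, and then use Lemma \ref{lemma:Chebyshev+Demmel} with the substitution $t = \widehat{u}/\widehat{v}$ exactly as in the curve case to conclude $\rho(\widehat{P}_k(\bar{\bar{{\bm A}}})) \le 2\big((\sqrt{\widehat{v}}-\sqrt{\widehat{u}})/(\sqrt{\widehat{v}}+\sqrt{\widehat{u}})\big)^k$. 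Reading off the step sizes as the inverse roots of $\widehat{P}_k$, i.e. $\frac{2\omega_\ell^{-1}}{\widehat{v}-\widehat{u}} - \frac{\widehat{v}+\widehat{u}}{\widehat{v}-\widehat{u}} = \cos\!\left(\frac{2\ell+1}{2k}\pi\right)$, yields the stated formula for $\omega_\ell$. There is no serious obstacle here; the only point needing a little care is the Kronecker-product bookkeeping — the transpose identity ${\bm B} \otimes {\bm A}^T = ({\bm B}^T \otimes {\bm A})^T$ and the equivalence between full rank of ${\bm B}^T \otimes {\bm A}$ and full column rank of each factor — which together guarantee that both the iteration machinery and the positive-definiteness needed for the Chebyshev estimate survive the reduction. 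Corollary \ref{corollary:ALSPIA-Rate1} follows in exactly the same manner from the singular analysis of Theorem \ref{thm:ALSPIA-Rate1}.
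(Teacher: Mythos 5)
Your proposal is correct and follows essentially the same route the paper intends: the paper gives no separate proof of the corollary, merely noting that the vectorized surface iteration \eqref{ALSPIA+Iteration+surface} has the same form as \eqref{ALSPIA+Iteration} with ${\bm B}^T \otimes {\bm A}$ in place of ${\bm A}$, so that Theorem \ref{thm:ALSPIA-Rate2} applies verbatim to $\bar{\bar{{\bm A}}}$. Your extra Kronecker-product bookkeeping (the transpose identity, the rank equivalence, and the factorization $\bar{\bar{{\bm A}}} = ({\bm B}{\bm B}^T) \otimes ({\bm A}^T{\bm A})$) is accurate and only makes explicit what the paper leaves implicit.
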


\section{Numerical experiments}\label{Sec:4}
In this section,  we compare the performance of our method with LSPIA \cite{14DL,18LCZ,13LZ} in terms of iteration number (denoted as IT), computing time in seconds (denoted as CPU), and relative fitting error respectively defined by
\begin{equation*}
 {\textsc{E}}_k = \frac{\sum_{i=0}^n \BT{ \sum_{j=0}^m \mu_i(x_j){\bm r}^{(k)}_{j}}}{\sum_{i=0}^n \BT{ \sum_{j=0}^m \mu_i(x_j){\bm r}^{(0)}_{j}}}
\end{equation*}
and
\begin{equation*}
{\textsc{E}}_k = \frac{\sum_{i=0}^n \sum_{j=0}^n \BT{ \sum_{h=0}^m \sum_{l=0}^p\mu_i(x_h)\mu_j(y_l){\bm R}^{(k)}_{hl}}}
 {\sum_{i=0}^n \sum_{j=0}^n \BT{ \sum_{h=0}^m \sum_{l=0}^p\mu_i(x_h)\mu_j(y_l){\bm R}^{(0)}_{hl}}}
\end{equation*}
for curve and surface cases, respectively, when $k=0,1,2,\cdots$. We also report two speed-ups of ALSPIA against LSPIA, which are
defined by
\begin{equation*}
  {\textsc{S}}_{\textsc{it}} = \frac{{\rm IT~of~LSPIA}}{{\rm IT~of~ALSPIA}}
 ~ {\rm and} ~
  {\textsc{S}}_{\textsc{cpu}} = \frac{{\rm CPU~of~LSPIA}}{{\rm CPU~of~ALSPIA}}.
\end{equation*}
The experiments are terminated once ${\textsc{E}}_{k}$ is less than $10^{-6}$ or IT exceeds $10^{4}$ and let ${\textsc{E}}_k$ be ${\textsc{E}}_\infty$.

We test the ALSPIA method for six representative examples for the B-spline curves fitting and tensor product surface fitting. These examples are given as follows (available from \verb"http://paulbourke.net/geometry/").

\begin{example}\label{ex:ALSPIA+curve1}
  $m+1$ points sampled uniformly from a blob-shaped curve, whose polar coordinate equation is
${\bm r} = 2 + 4\cos\left(2\theta_1+ \pi/4\right) + \cos\left(3\theta_1 + \pi/4\right)$ with $0 \leq \theta_1 \leq 2\pi$.
\end{example}

\begin{example}\label{ex:ALSPIA+curve2}
  $m+1$ points sampled from a spherical cardioid curve, whose coordinates are given by
$\bx = 2\cos(\theta_1) - \cos(3\theta_1)$, $\by = 2\sin(\theta_1) - \sin(3\theta_1)$, and $\bz = 2\cos(\theta_1/2)$ with $0 \leq \theta_1 \leq 4\pi$.
\end{example}

\begin{example}\label{ex:ALSPIA+curve3}
  $m+1$ points sampled from a function
  \begin{align*}
    f(\theta_1) = (3+\theta_1)^2\sin(10\theta_1)(\cos^2(\theta_1))/(\theta_1+1)^2
  \end{align*}
  with $0 \leq \theta_1 \leq 2\pi$.
\end{example}

\begin{example}\label{ex:ALSPIA+curve4}
  $m+1$ points sampled from a helix curve, whose coordinates are given by $\bx = 10\cos(\theta_1\pi/3)$, $\by = 10\sin(\theta_1\pi/3)$, and $\bz = \theta_1\pi/3$ with $0 \leq \theta_1 \leq 2\pi$.
\end{example}

\begin{example}\label{ex:ALSPIA+surface1}
$(m+1)\times (p+1)$ points sampled uniformly from a Lemnescate surface, whose parametric equation is given by
\begin{align*}
\left\{ \begin{array}{l}
\bx = \sqrt{|\sin(2\theta_1)|}\cos(\theta_1)\cos(\theta_2),\vspace{1ex}\\
\by = \sqrt{|\sin(2\theta_1)|}\sin(\theta_1)\cos(\theta_2),\vspace{1ex}\\
\bz = \bx^2 - \by^2 + 2\bx\by \tan^2(\theta_2)
\end{array}\right .
\end{align*}
with $0 \leq \theta_1,~\theta_2 \leq \pi$.
\end{example}

\begin{example}\label{ex:ALSPIA+surface2}
$(m+1)\times (p+1)$ points sampled uniformly from the peaks function
\begin{align*}
  f(\theta_1,\theta_2) =  3(1-\theta_1)^2e^{-\theta_1^2 - (\theta_2+1)^2} - 10(\theta_1/5 - \theta_1^3 - \theta_2^5)e^{-\theta_1^2 - \theta_2^2}
   - 1/3e^{-(\theta_1+1)^2 - \theta_2^2}
\end{align*}
 with $-3 \leq \theta_1 \leq 3$ and $-4 \leq \theta_2 \leq 4$.
\end{example}

\subsection{Curve fitting}
The implementation details of LSPIA and ALSPIA for curve fitting are arranged as follows.

Given an ordered point set $\left\{{\bm q}_j\right\}_{j=0}^m$ in $\mathbb{R}^2$ or $\mathbb{R}^3$, we assign the parameter sequence $\left\{x_j \right\}_{j=0}^m$ according to the normalized accumulated chord  parameterization  method, i.e.,
\begin{equation*}
 x_0 = 0,~x_j = x_{j-1}+\frac{\bT{{\bm q}_j-{\bm q}_{j-1}}}{\sum_{s=1}^m \bT{{\bm q}_s-{\bm q}_{s-1}}},
~{\rm and}~
 x_m =1
\end{equation*}
for $j=1,2,\cdots,m$. We choose a cubic B-spline basis, which is popular and effective in computer-aided geometric design research; see, e.g., \cite{14DL}. The knot vector is defined by
\begin{equation*}
  {\bm \nu} = \left\{
  0,~0,~0,~0,
  ~\bar{x}_4,~\bar{x}_5,\cdots,\bar{x}_n,
  ~1,~1,~1,~1
  \right\}
\end{equation*}
with $\bar{x}_{j+3}=(1-\alpha)x_{i-1} + \alpha x_i$, $i = \lfloor jd \rfloor,~\alpha=jd-i$, $d = (m+1)/(n-2)$ for $j=1,2,\cdots,n-3$, and the notation $\lfloor \cdot \rfloor$ is the greatest integer function. For more details on the formulations of generating the parameter sequence and knot vector, we respectively refer to equations (9.5) and (9.69) in the book by Piegl and Tiller \cite{97PT}.  The collocation matrix is realized by applying MATLAB built-in function, e.g., $A= spcol ({\bm \mu}, 4, {\bm \nu})$.
  For $i=1,2,\cdots,n-1$, the initial control points are selected by
 \begin{equation*}
   {\bm p}_0^{(0)} = {\bm q}_0,~
   {\bm p}_i^{(0)} = {\bm q}_{\left\lfloor\frac{(m+1)i}{n}\right\rfloor},~{\rm and}~
   {\bm p}_n^{(0)} = {\bm q}_m,
 \end{equation*}
which is also described in the formula (23) of Deng and Lin \cite{14DL}.

As shown in  \cite[Theorem 3.1]{14DL}, the optimal parameter appeared in LSPIA is given by $\omega=\omega_{\rm opt}$. Note that the extreme singular values are computed via MATLAB function, e.g., {\tt svd}. To ensure fairness, we execute all these methods without explicitly forming them.

(1) \textbf{${\bm A}^T{\bm A}$ is nonsingular.} For the data points in Examples \ref{ex:ALSPIA+curve1} and \ref{ex:ALSPIA+curve2}, we list the numerical results, including the  relative fitting errors, the numbers of iteration steps, the CPU times, and the speed-ups of IT and CPU, for the LSPIA and ALSPIA methods in Tables  \ref{tab:IT+CPU-ex1} and \ref{tab:IT+CPU-ex2}. In these two examples, the collection matrices are of full-column rank. We find that the relative fitting errors of the tested methods are analogous while ALSPIA takes  much fewer iteration counts and CPU times than LSPIA  with various $m$ and $n$, which indicates that ALSPIA is more effective than LSPIA. We also observe that the two speed-ups increase along with the number of control points growing larger for a fixed $m$. In Figures \ref{fig:ex1-ALSPIA1+Result} and \ref{fig:ex2-ALSPIA1+Result}, we  respectively draw the initial data points to be fitted, the cubic B-spline fitting curves, and the convergence behaviors of relative fitting error versus CPU time for LSPIA and ALSPIA when $(m,n)=(8000,1000)$. We see that the curves constructed by ALSPIA approximate the given data points accurately and  the relative fitting errors of ALSPIA decay faster than that of LSPIA.

\begin{table}[!htb]
 \normalsize
\caption{The numerical results of ${\textsc{E}}_\infty$, IT, and CPU for LSPIA and ALSPIA in Example  \ref{ex:ALSPIA+curve1} with various $m$ and $n$.}
\centering
\begin{tabular}{ cccccc}
\cline{1-6}
& ($m$, $n$)        & (8000, 1000) & (8000, 2000)& (8000, 3000)& (10000, 3000)\\
\cline{1-6}
LSPIA  &{${\textsc{E}}_\infty$}&$9.11\times 10^{-7}$&$9.87\times 10^{-7}$&$9.88\times 10^{-7}$&$9.82\times 10^{-7}$\\
       &{IT}        &135&277&898&418\\
       &{CPU}       &1.714&6.881&38.829&20.043\\
\cline{1-6}
ALSPIA &{${\textsc{E}}_\infty$}&$9.76\times 10^{-7}$&$6.67\times 10^{-7}$&$6.72\times 10^{-7}$&$6.46\times 10^{-7}$\\
       &{IT}        &10&9&14&12\\
       &{CPU}       &0.115&0.193&0.497&0.515\\
\cline{1-6}
Speed-up & ${\textsc{S}}_{\textsc{it}}$  &\textbf{13.500}&\textbf{30.778}&\textbf{64.143}&\textbf{34.833}\\
         & ${\textsc{S}}_{\textsc{cpu}}$ &\textbf{14.954}&\textbf{35.688}&\textbf{78.062}&\textbf{38.899}\\
\hline
\hline
& ($m$, $n$)        & (15000, 3000) & (15000, 4000)& (15000, 5000)& (20000, 5000)\\
\cline{1-6}
LSPIA  &{${\textsc{E}}_\infty$}&$9.36\times 10^{-7}$&$9.75\times 10^{-7}$&$9.87\times 10^{-7}$&$9.67\times 10^{-7}$\\
       &{IT}        &201&317&573&280\\
       &{CPU}       &14.462&31.715&74.945& 47.871\\
\cline{1-6}
ALSPIA &{${\textsc{E}}_\infty$}&$6.99\times 10^{-7}$&$ 9.45\times 10^{-7}$&$7.13\times 10^{-7}$&$6.55\times 10^{-7}$\\
       &{IT}        &8&10&7&7\\
       &{CPU}       &0.528&0.875&0.736&1.002\\
\cline{1-6}
Speed-up & ${\textsc{S}}_{\textsc{it}}$  &\textbf{25.125}&\textbf{31.700}&\textbf{81.857} &\textbf{40.000} \\
         & ${\textsc{S}}_{\textsc{cpu}}$ &\textbf{27.366}&\textbf{36.246}&\textbf{101.834}&\textbf{47.782} \\
\cline{1-6}
\end{tabular}
\label{tab:IT+CPU-ex1}
\end{table}

\begin{table}[!htb]
 \normalsize
\caption{The numerical results of ${\textsc{E}}_\infty$, IT, and CPU for LSPIA and ALSPIA in Example  \ref{ex:ALSPIA+curve2} with various $m$ and $n$.}
\centering
\begin{tabular}{ cccccc}
\cline{1-6}
& ($m$, $n$)        & (8000, 1000) & (8000, 2000)& (10000, 1000)& (10000, 2000)\\
\cline{1-6}
LSPIA  &{${\textsc{E}}_\infty$}&$9.27\times 10^{-7}$&$8.57\times 10^{-7}$&$8.53\times 10^{-7}$&$9.09\times 10^{-7}$\\
       &{IT}        &79&78&81&78\\
       &{CPU}       &0.901&1.899&1.295&2.577\\
\cline{1-6}
ALSPIA &{${\textsc{E}}_\infty$}&$4.61\times 10^{-7}$&$6.00\times 10^{-7}$&$4.46\times 10^{-7}$&$5.67\times 10^{-7}$\\
       &{IT}        &5&4&5&4\\
       &{CPU}       &0.046&0.075&0.057&0.095\\
\cline{1-6}
Speed-up & ${\textsc{S}}_{\textsc{it}}$  &\textbf{15.800}&\textbf{19.500}&\textbf{16.200}&\textbf{19.500}\\
         & ${\textsc{S}}_{\textsc{cpu}}$ &\textbf{19.545}&\textbf{25.320}&\textbf{22.566}&\textbf{27.165}\\
\hline
\hline
& ($m$, $n$)        & (12000, 1000) & (12000, 2000)& (14000, 1000)& (14000, 2000)\\
\cline{1-6}
LSPIA  &{${\textsc{E}}_\infty$}&$9.62\times 10^{-7}$&$9.49\times 10^{-7}$&$9.38\times 10^{-7}$&$9.41\times 10^{-7}$\\
       &{IT}        &113&165&109&147 \\
       &{CPU}       &2.029&6.359&2.338&6.814\\
\cline{1-6}
ALSPIA &{${\textsc{E}}_\infty$}&$9.95\times 10^{-7}$&$5.48\times 10^{-7}$&$9.82\times 10^{-7}$&$5.32\times 10^{-7}$\\
       &{IT}        &4&4&4&4\\
       &{CPU}       &0.055&0.116&0.064&0.141\\
\cline{1-6}
Speed-up & ${\textsc{S}}_{\textsc{it}}$  &\textbf{28.250}&\textbf{41.250}&\textbf{27.250}&\textbf{36.750}\\
         & ${\textsc{S}}_{\textsc{cpu}}$ &\textbf{36.799}&\textbf{54.649}&\textbf{36.369}&\textbf{48.384}\\
\cline{1-6}
\end{tabular}
\label{tab:IT+CPU-ex2}
\end{table}

\begin{figure}[!htb]
\centering
    \subfigure[Initial data points]{
		\includegraphics[width=0.45\textwidth]{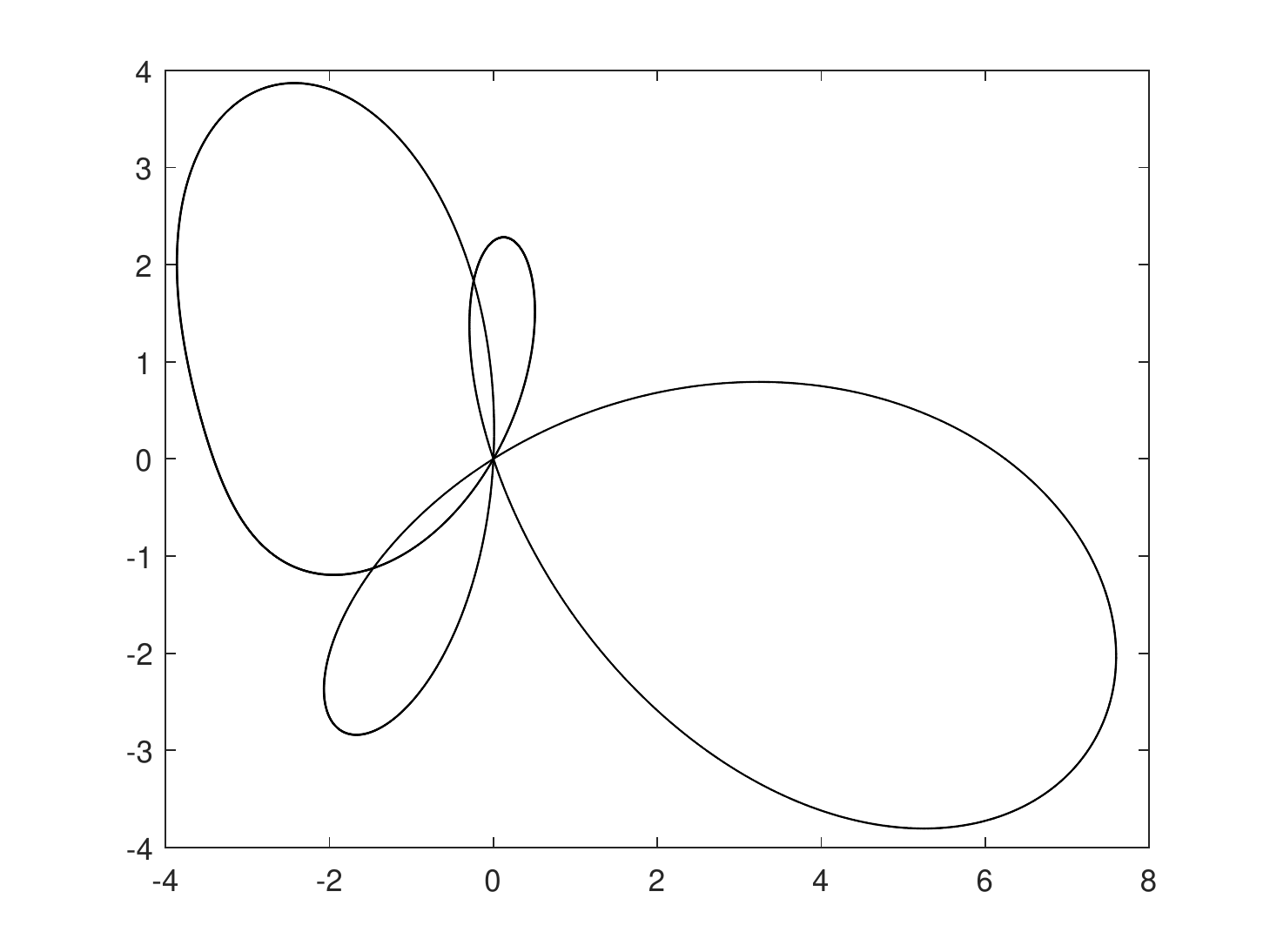}}
	\subfigure[Curve by LSPIA]{
	    \includegraphics[width=0.45\textwidth]{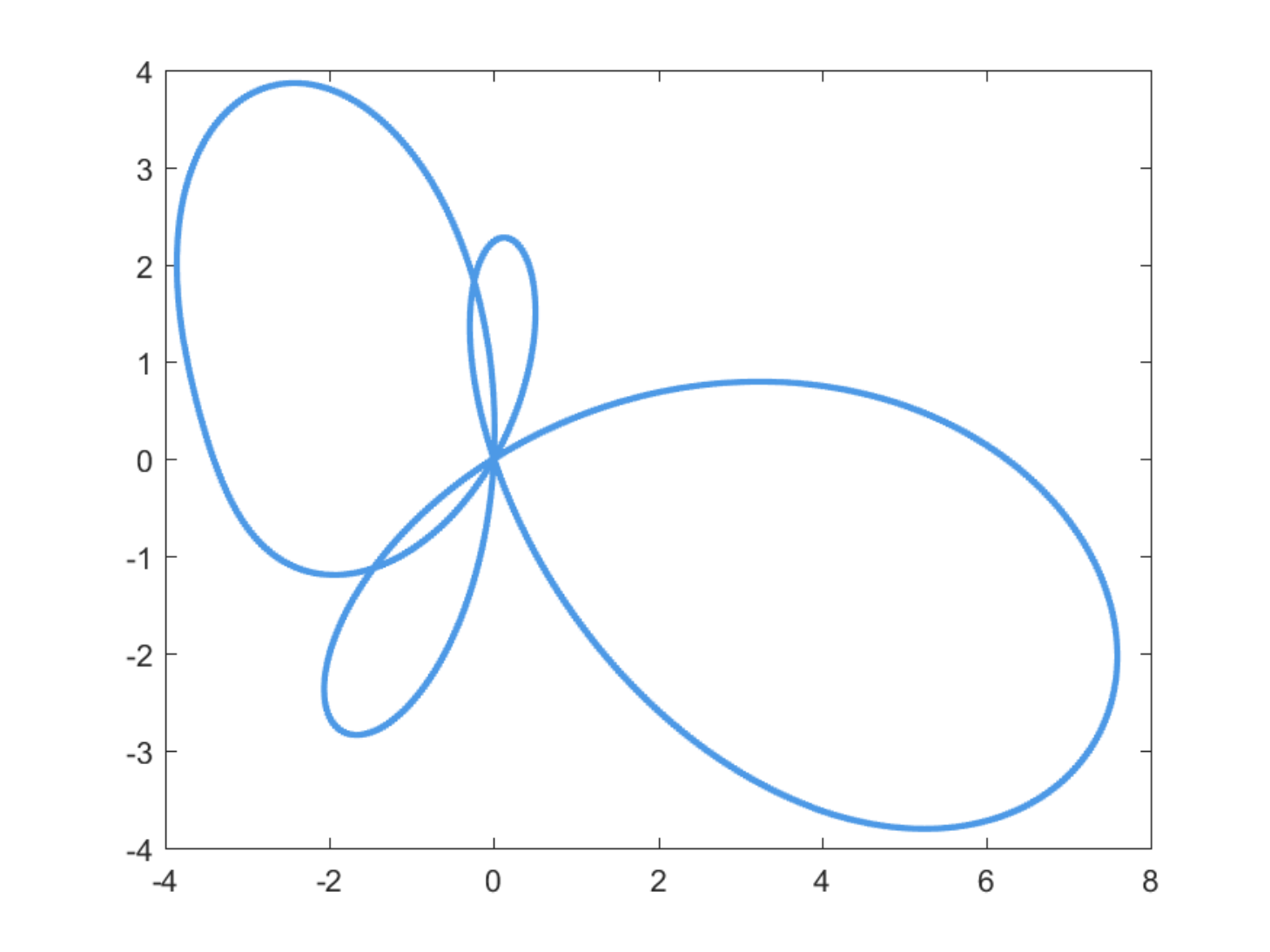}}
	\subfigure[Curve by ALSPIA]{
	    \includegraphics[width=0.45\textwidth]{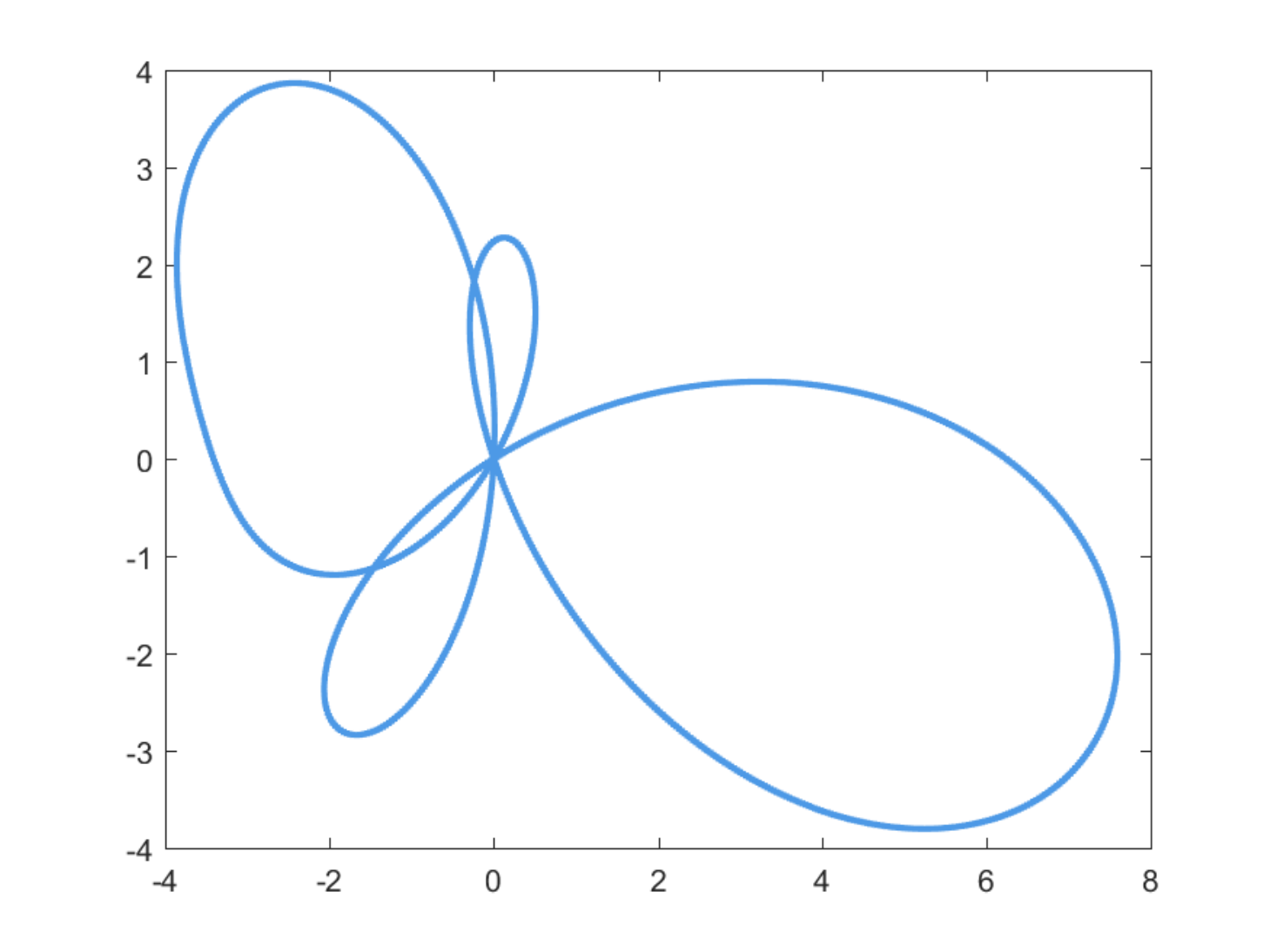}}
    \subfigure[$E_k$ vs CPU]{
	    \includegraphics[width=0.45\textwidth]{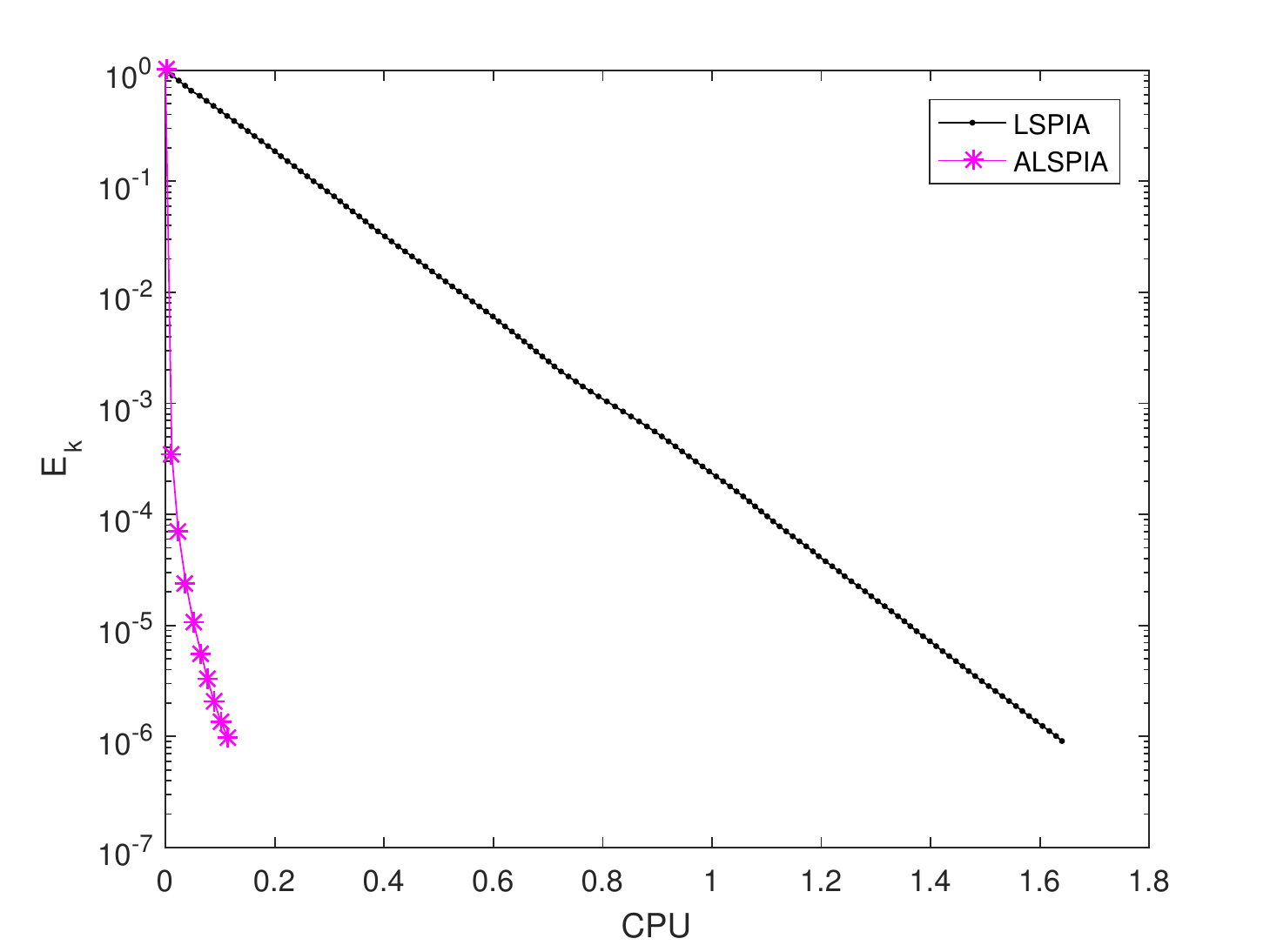}}
\caption{The initial data points, the cubic B-spline fitting curves, and the convergence behaviors of relative fitting error versus CPU time given by LSPIA and ALSPIA with $m=8000$ and $n=1000$ for Example \ref{ex:ALSPIA+curve1}.}
\label{fig:ex1-ALSPIA1+Result}
\end{figure}

\begin{figure}[!htb]
\centering
    \subfigure[Initial data points]{
	    \includegraphics[width=0.45\textwidth]{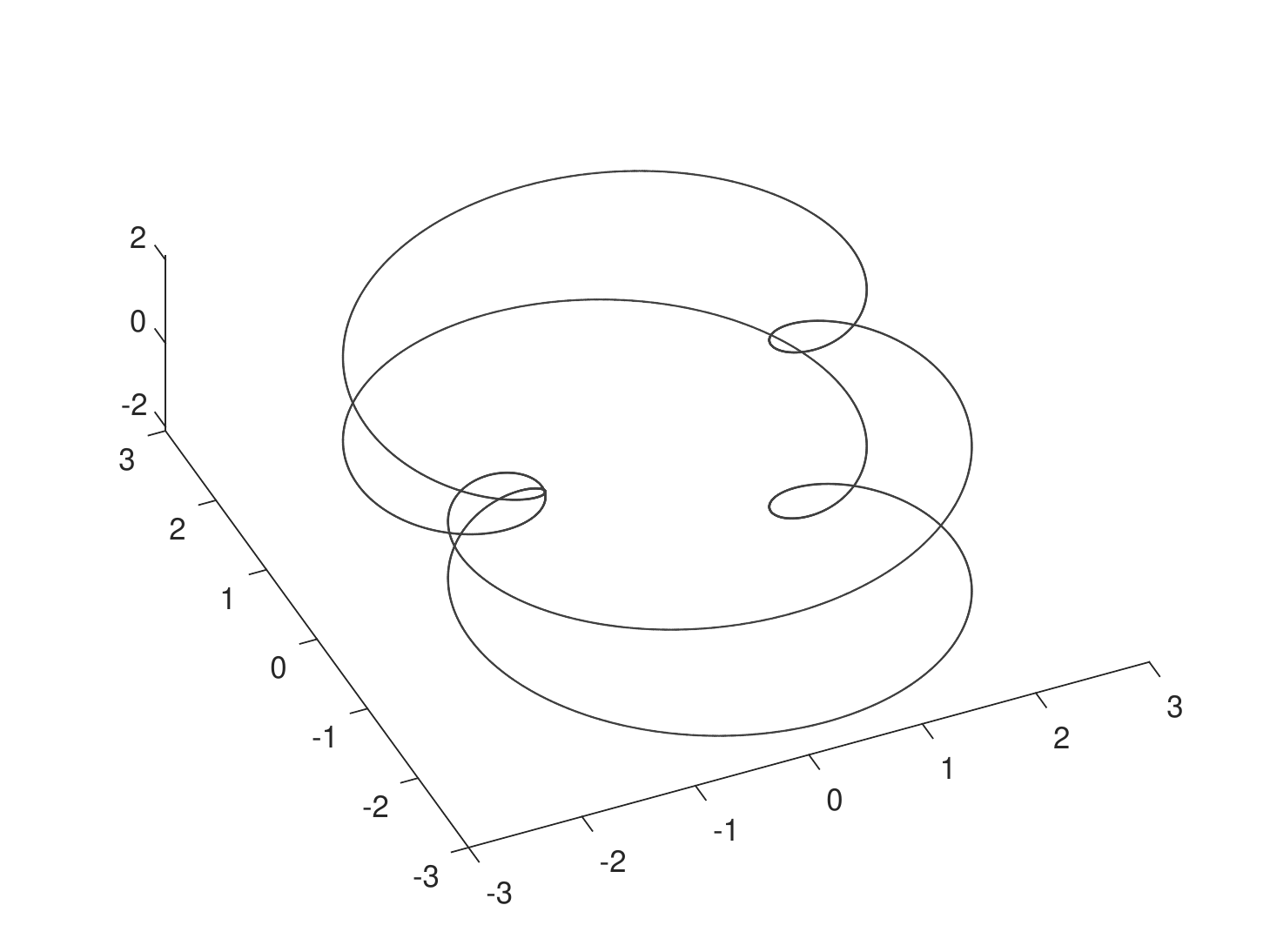}}
	\subfigure[Curve by LSPIA]{
	    \includegraphics[width=0.45\textwidth]{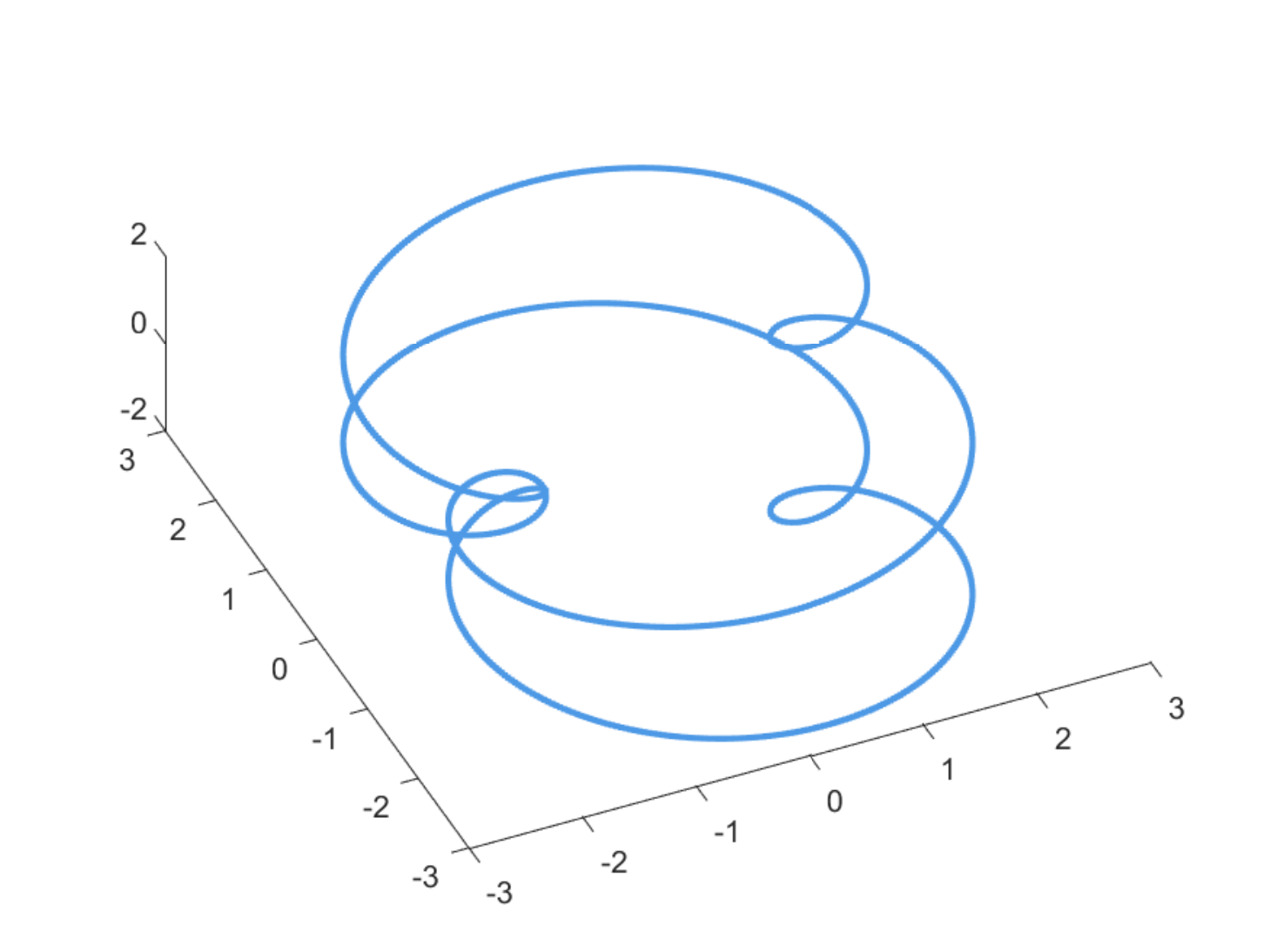}}
	\subfigure[Curve by ALSPIA]{
	    \includegraphics[width=0.45\textwidth]{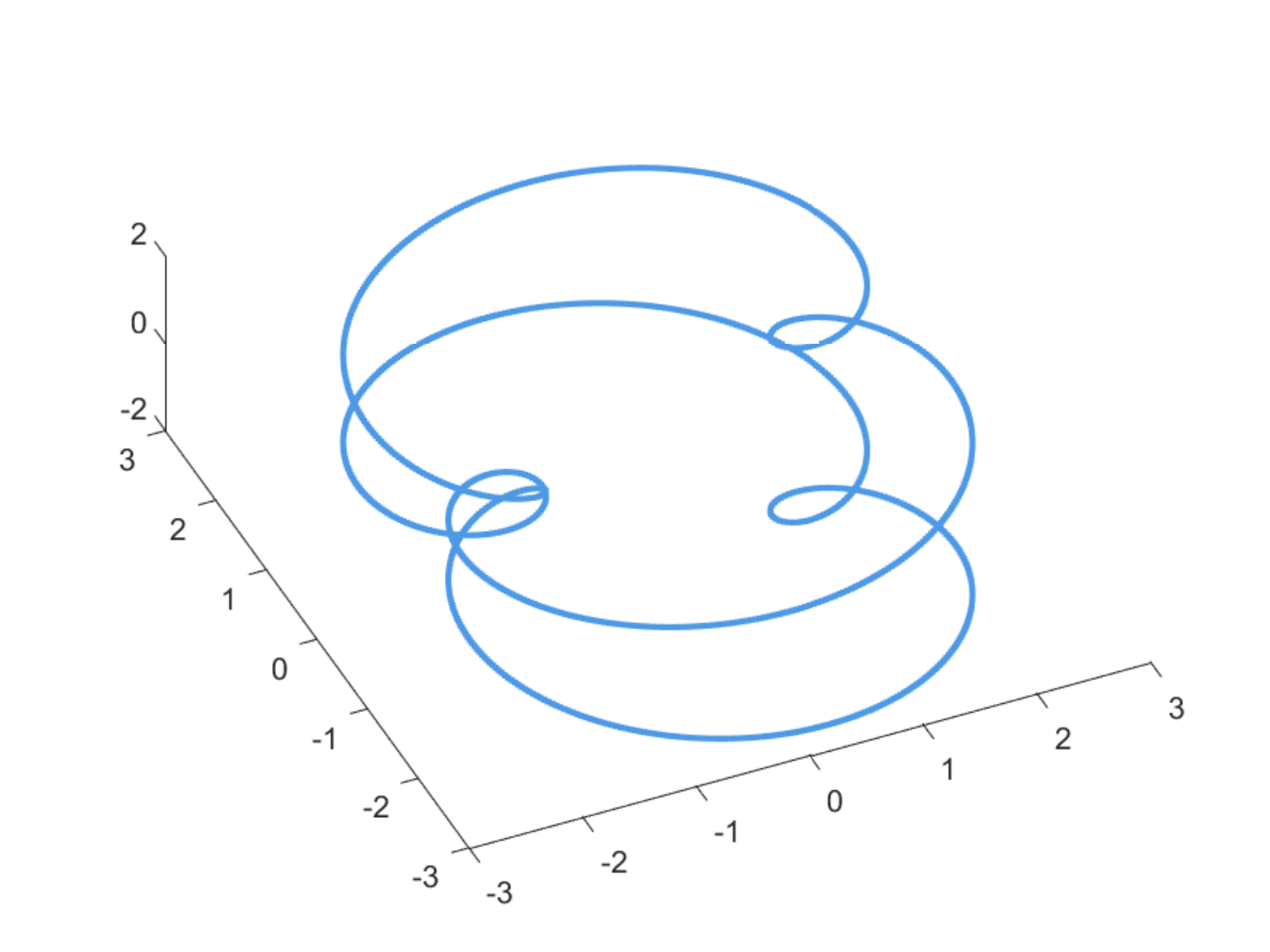}}
    \subfigure[$E_k$ vs CPU]{
	    \includegraphics[width=0.45\textwidth]{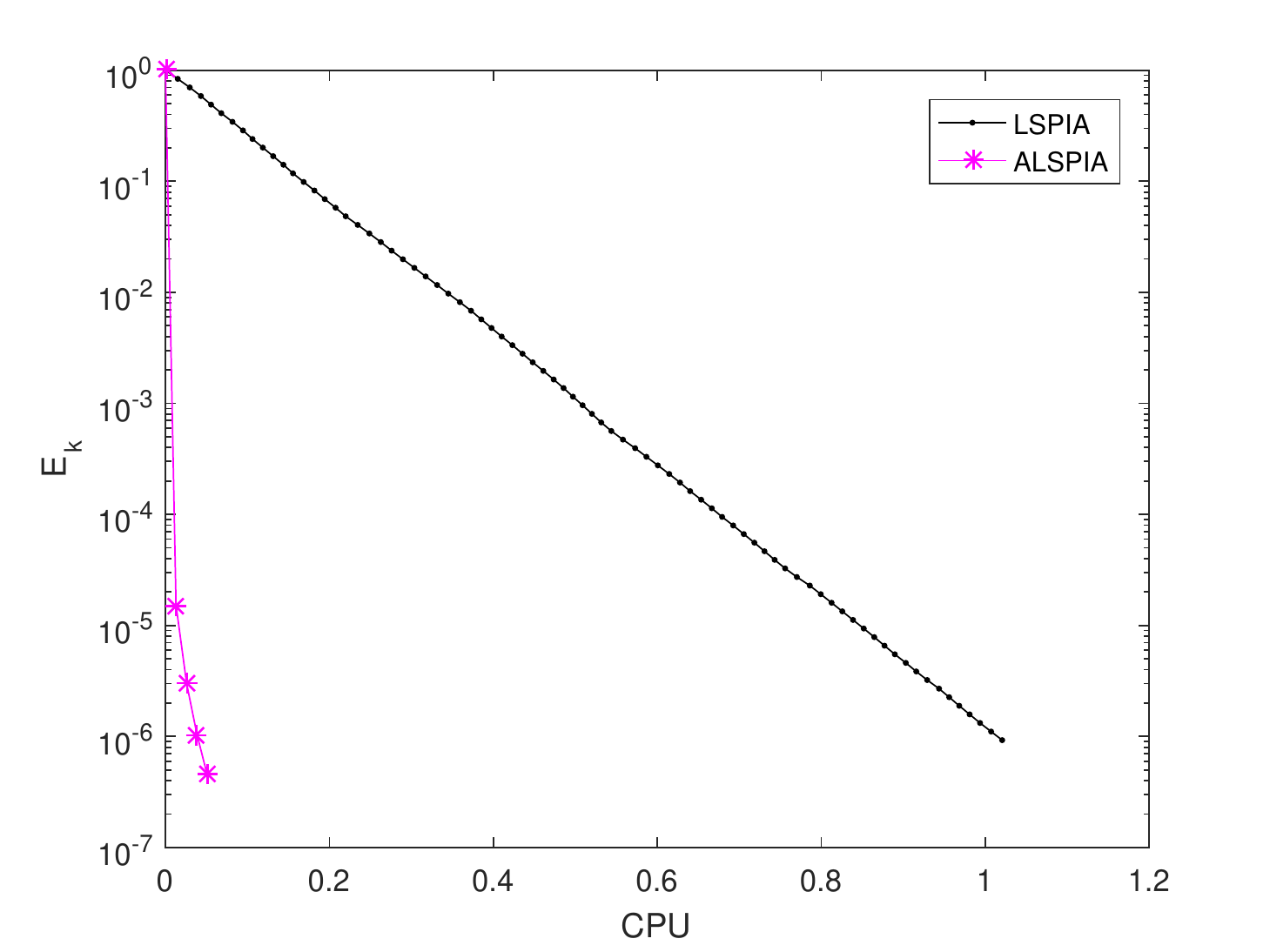}}
\caption{The initial data points, the cubic B-spline fitting curves, and the convergence behaviors of relative fitting error versus CPU time given by LSPIA and ALSPIA with $m=8000$ and $n=1000$ for Example \ref{ex:ALSPIA+curve2}.}
\label{fig:ex2-ALSPIA1+Result}
\end{figure}

(2) \textbf{${\bm A}^T{\bm A}$ is singular.} In the least-squares fitting problems, the rank-deficient collocation matrix may emerge if one takes a  missing data. For more details, we refer to \cite{18LCZ}. In Example \ref{ex:ALSPIA+curve3}, we employ $2001$ control points to fit $14600$ data points with a hole, as shown in Figure \ref{fig:ex_singular} (a). In this case, the rank of ${\bm A}^T{\bm A}$ is $1952$.  We display in Figures \ref{fig:ex_singular} (b) and (c) the cubic B-spline fitting curves generated by singular LSPIA and ALSPIA, respectively. Both of them can fit the initial data points well for this singular least-squares case. However, from Figure \ref{fig:ex_singular} (d), where the iteration history of relative fitting error versus CPU time for the tested methods is given, we can see that the ALSPIA method needs less CPU time than the singular LSPIA method when the relative fitting error is comparable (${\textsc{E}}_\infty = 9.00 \times 10^{-7}$).

For a three-dimensional helix curve in Example \ref{ex:ALSPIA+curve4}, we utilize $3001$ control points to fit $18898$ data points that lack data in three places, which results in the rank of ${\bm A}^T{\bm A}$ being $2847$. The initial data points and the cubic B-spline fitting curves generated by  singular  LSPIA and ALSPIA are shown in Figures \ref{fig:ex_singular2} (a), (b), and (c), respectively. We can see that the given data points are fitted well by ALSPIA in this singular least-squares case. When the stop criterion is achieved, LSPIA and ALSPIA individually require $19$ and $15$ iteration counts and the latter needs less CPU time, as shown in Figure \ref{fig:ex_singular2} (d).  All the shreds of evidence indicate that the ALSPIA method is superior to the singular LSPIA method.

\begin{figure}[!htb]
\centering
    \subfigure[Initial points]{\label{fig:ex_singular:a}
	    \includegraphics[width=0.45\textwidth]{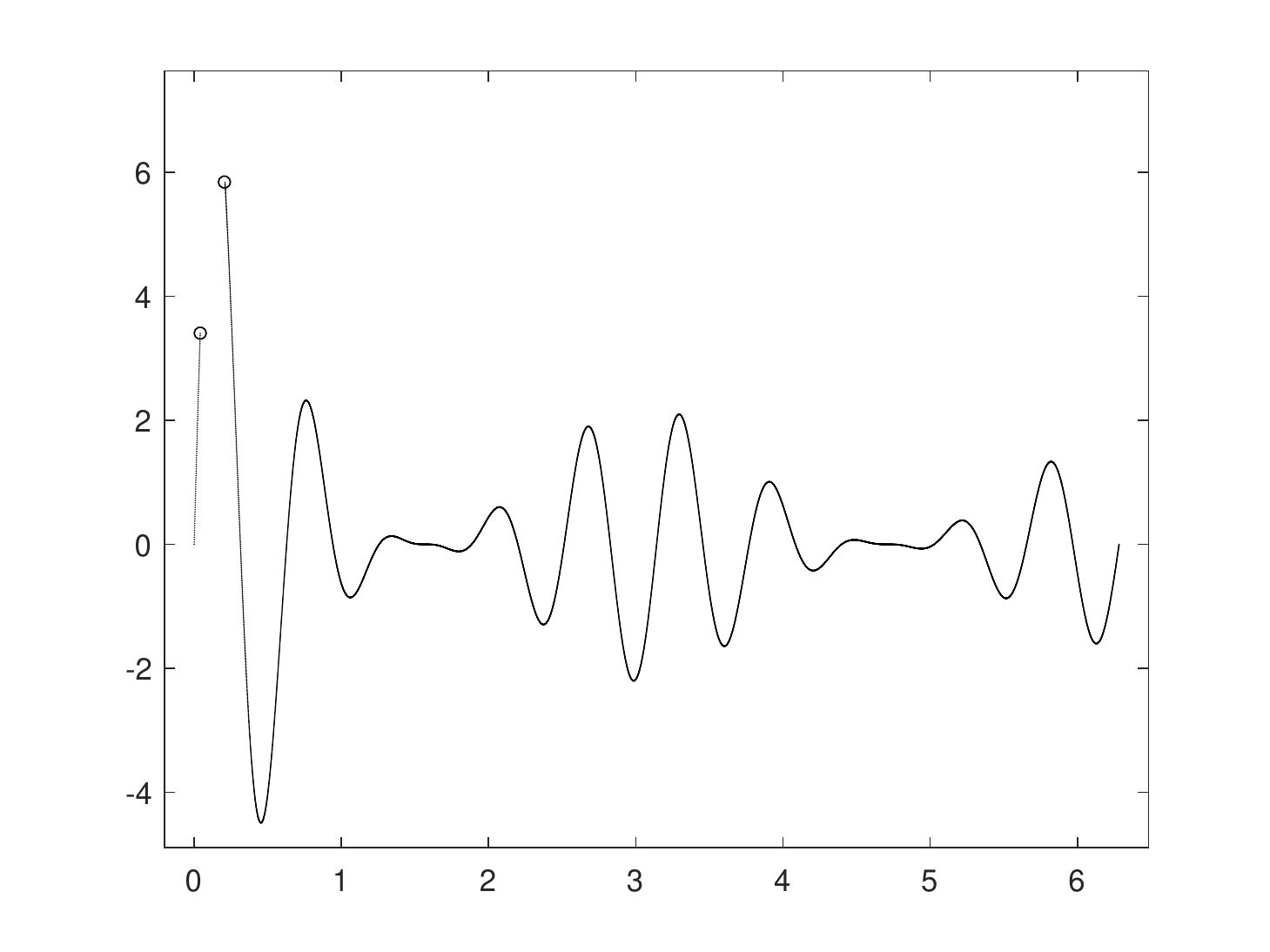}}
	\subfigure[Curve by LSPIA]{
	    \includegraphics[width=0.45\textwidth]{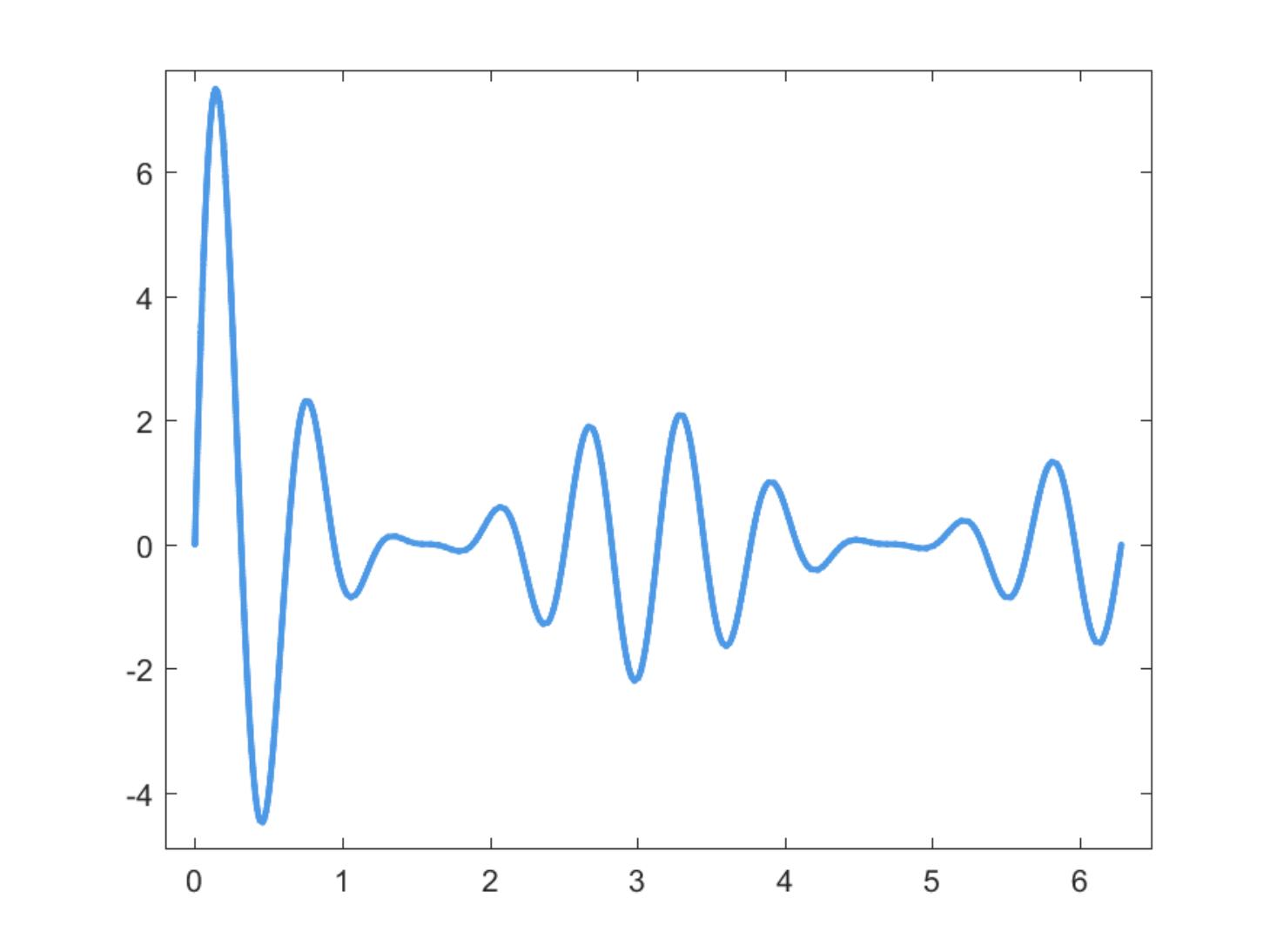}}
	\subfigure[Curve by ALSPIA]{
	    \includegraphics[width=0.45\textwidth]{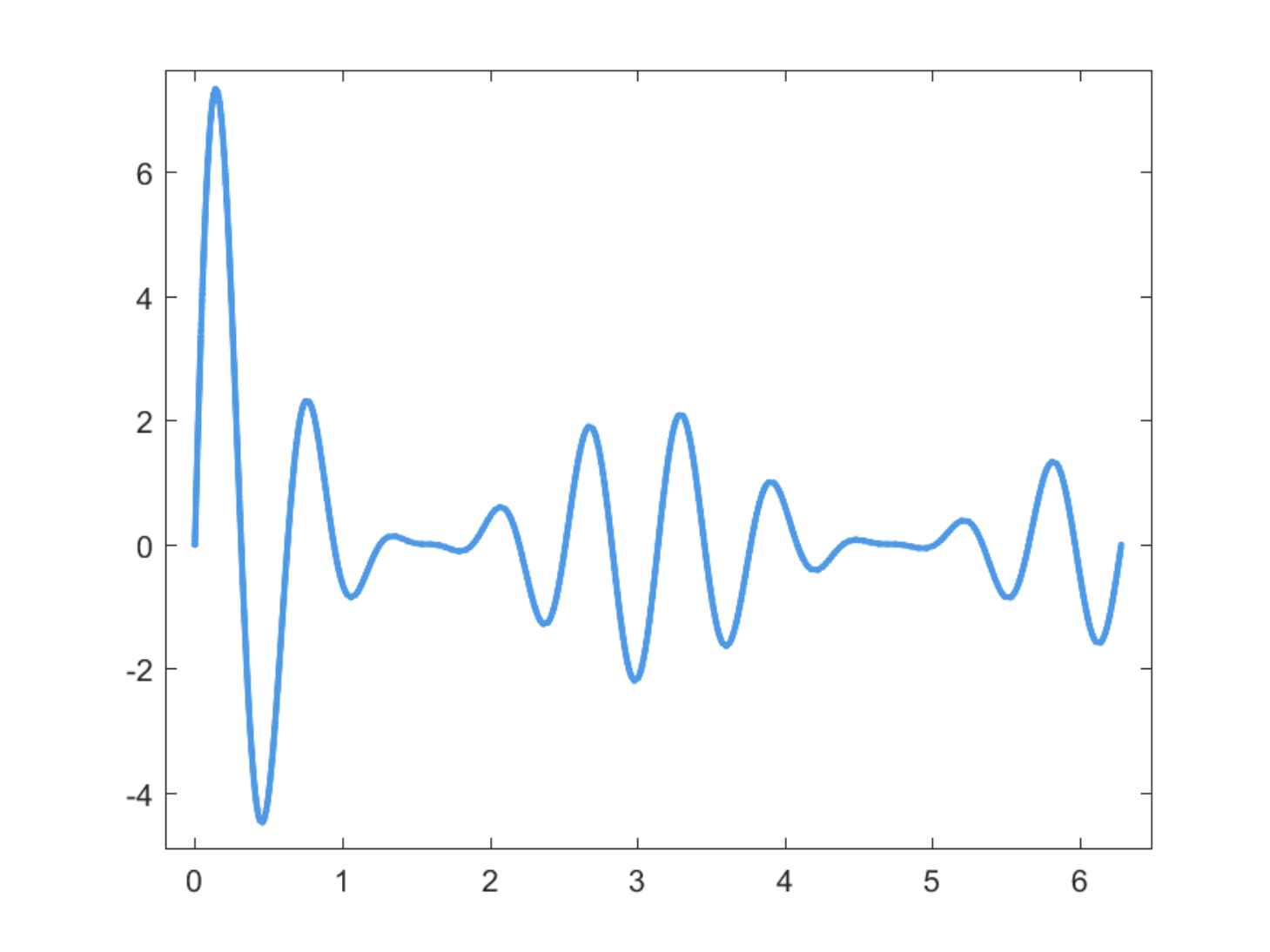}}
    \subfigure[$E_k$ vs CPU]{
	    \includegraphics[width=0.45\textwidth]{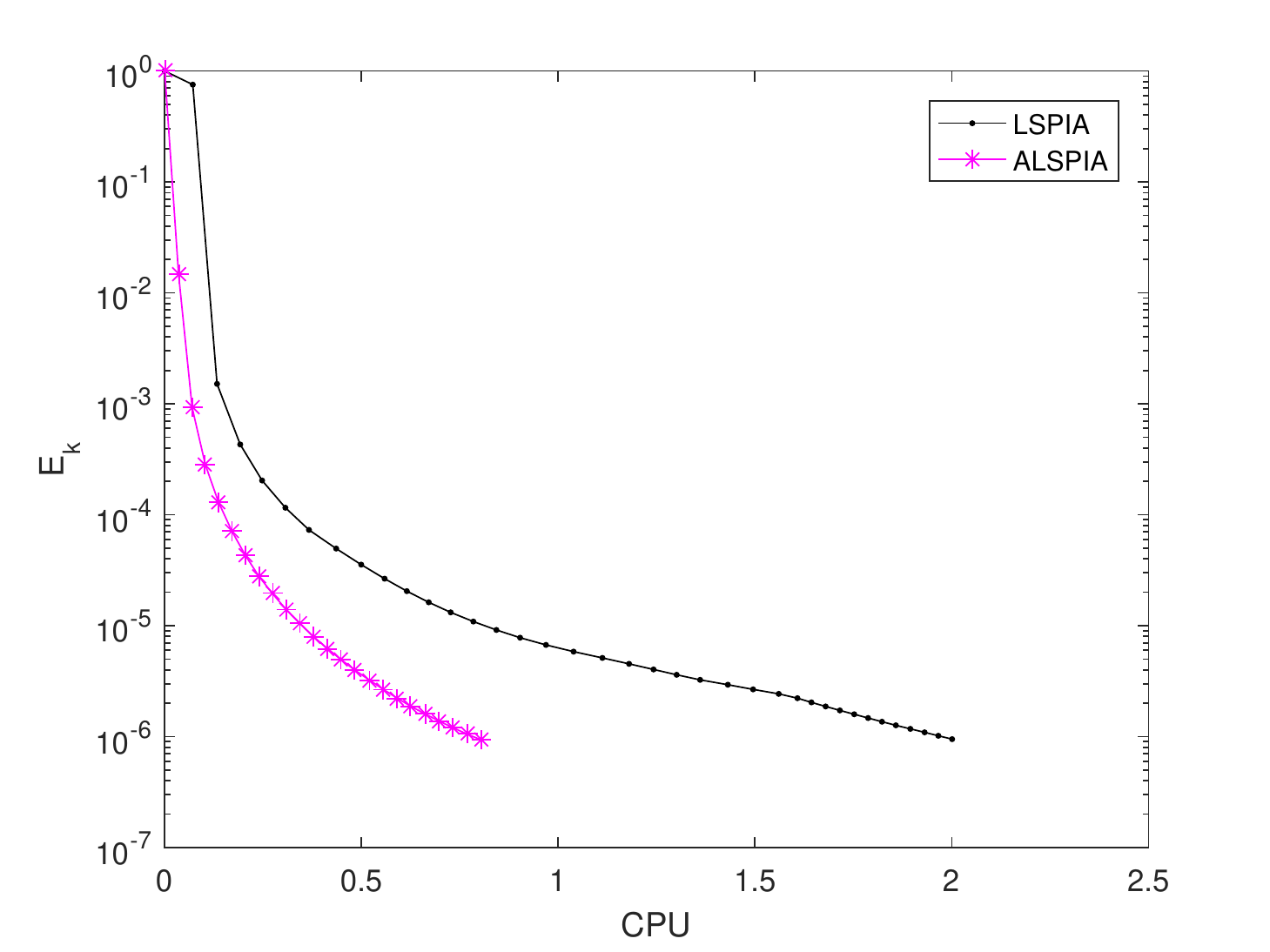}}
\caption{The initial data points, the cubic B-spline fitting curves, and the convergence behaviors of relative fitting error versus CPU time given by LSPIA and ALSPIA with $m=14600$ and $n=2000$ for Example \ref{ex:ALSPIA+curve3}.}
\label{fig:ex_singular}
\end{figure}

\begin{figure}[!htb]
\centering
    \subfigure[Initial points]{
	    \includegraphics[width=0.45\textwidth]{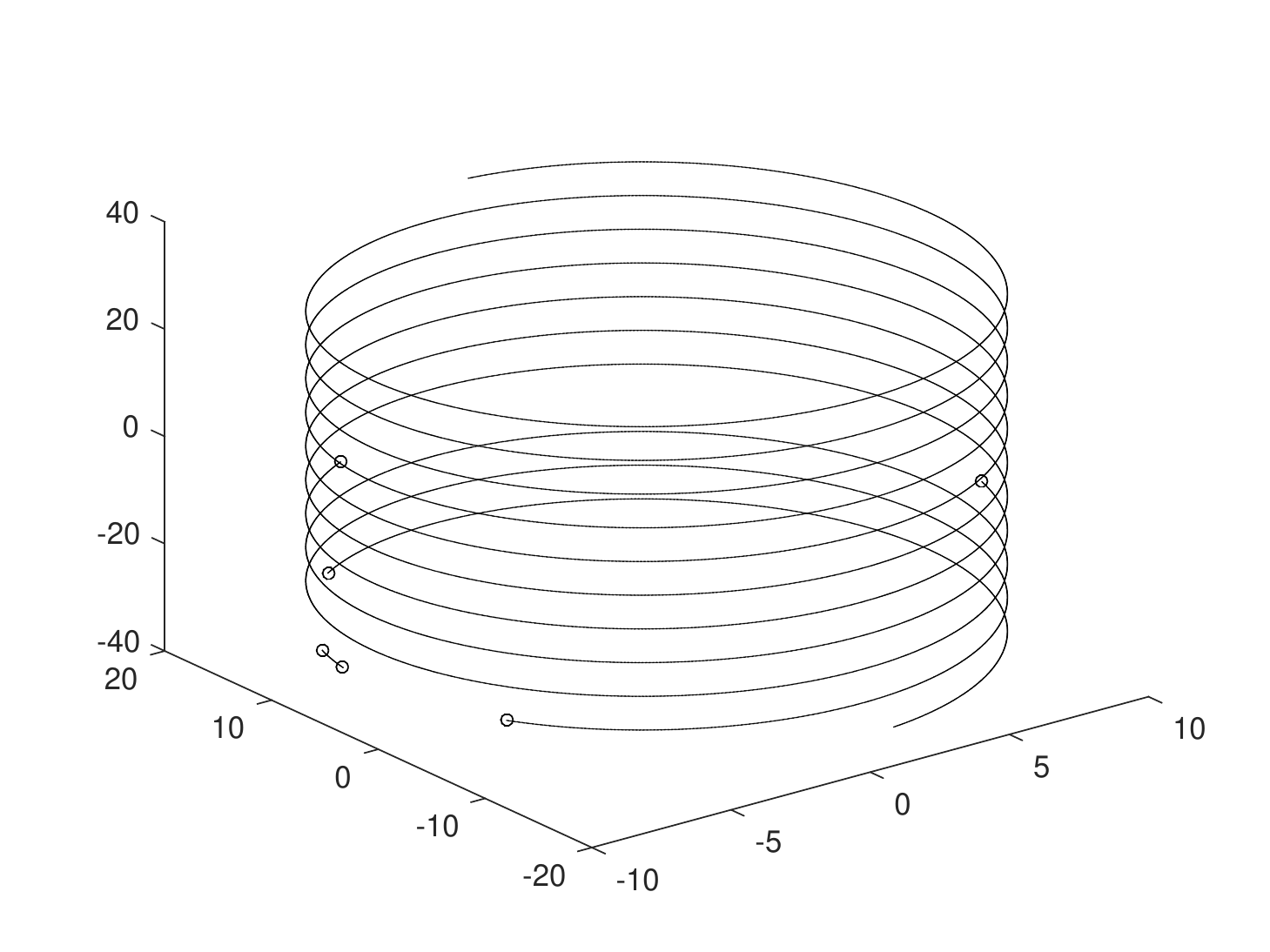}}
	\subfigure[Curve by LSPIA]{
	    \includegraphics[width=0.45\textwidth]{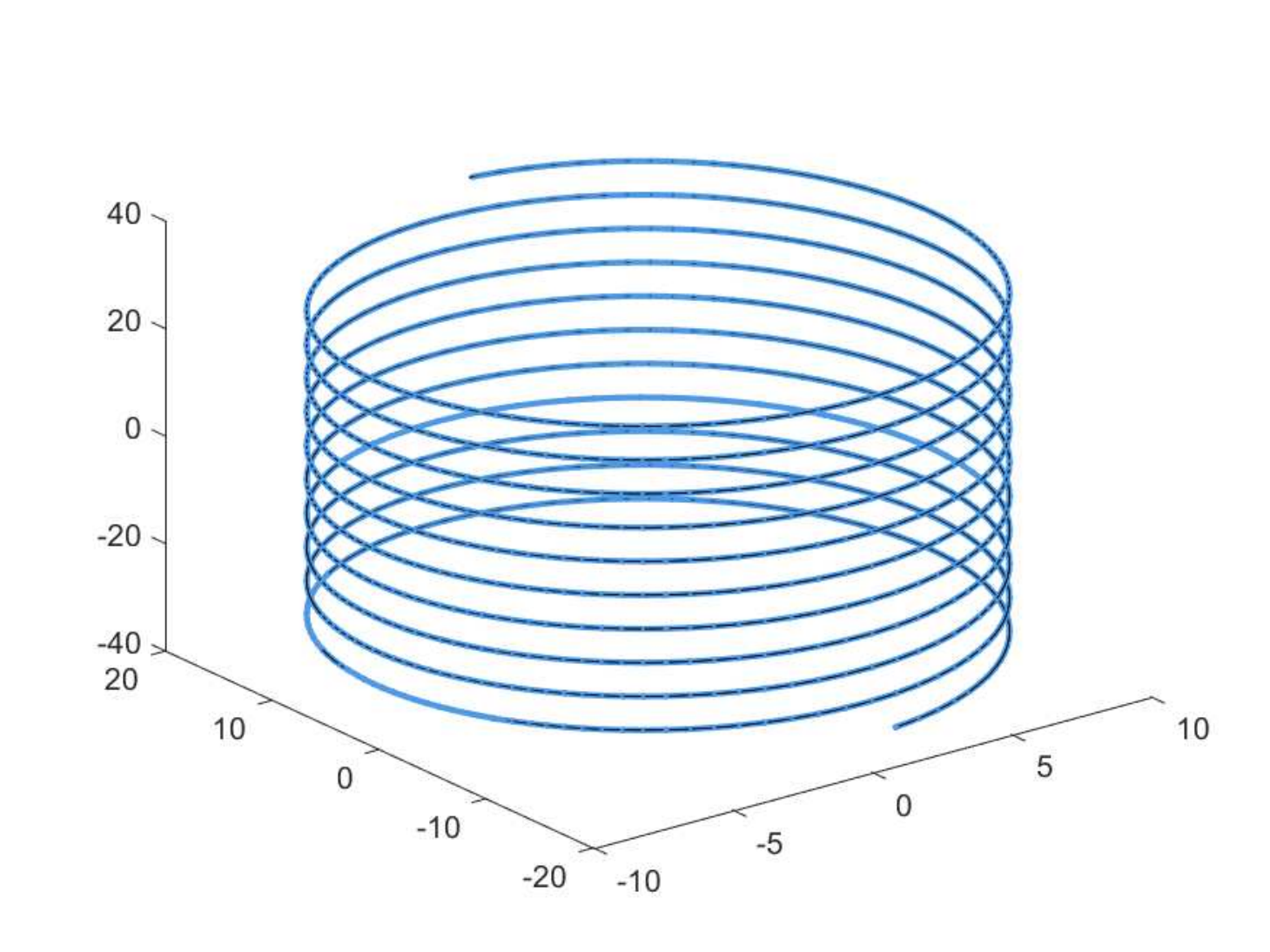}}
	\subfigure[Curve by ALSPIA]{
	    \includegraphics[width=0.45\textwidth]{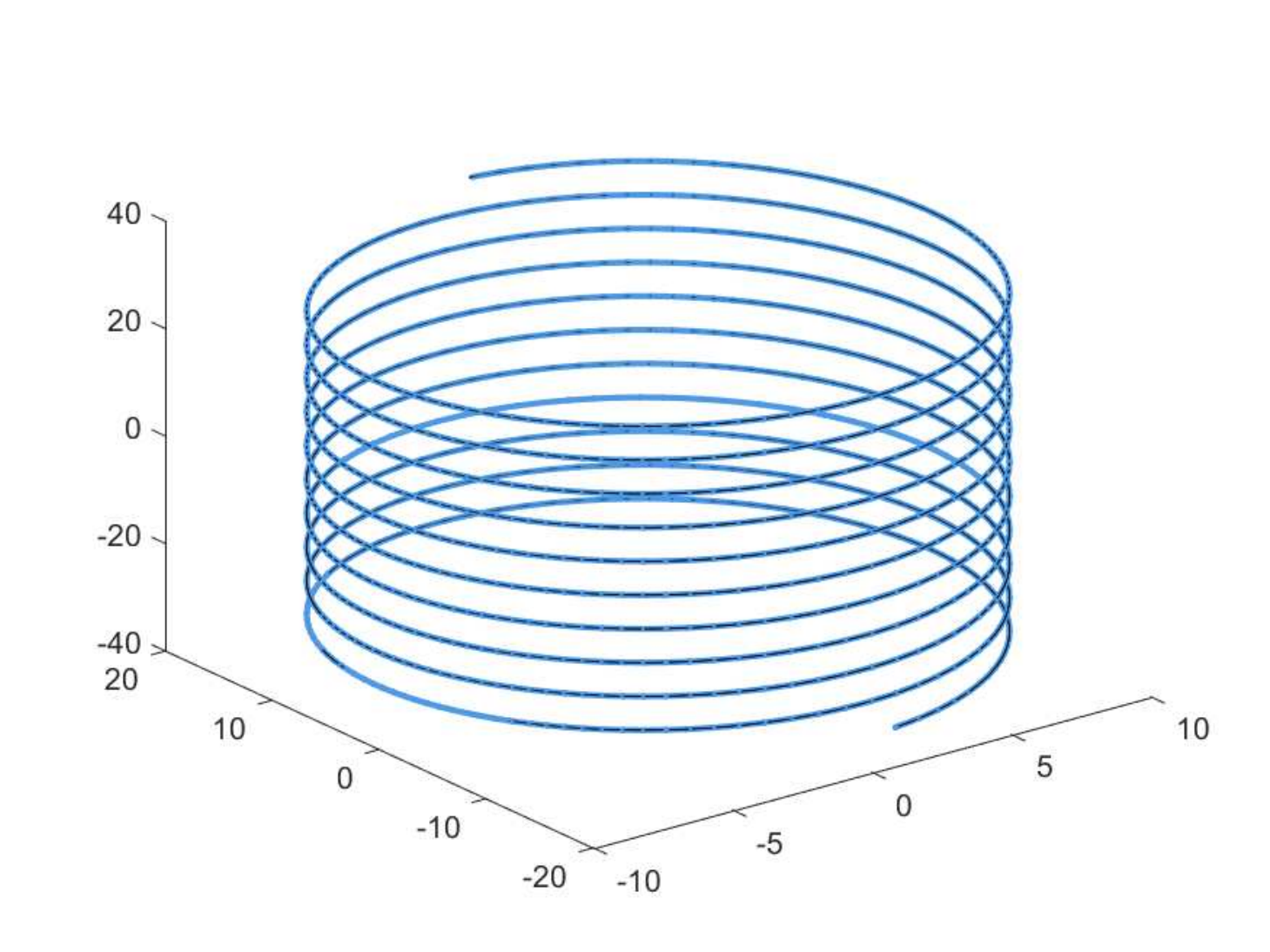}}
    \subfigure[$E_k$ vs CPU]{
	    \includegraphics[width=0.45\textwidth]{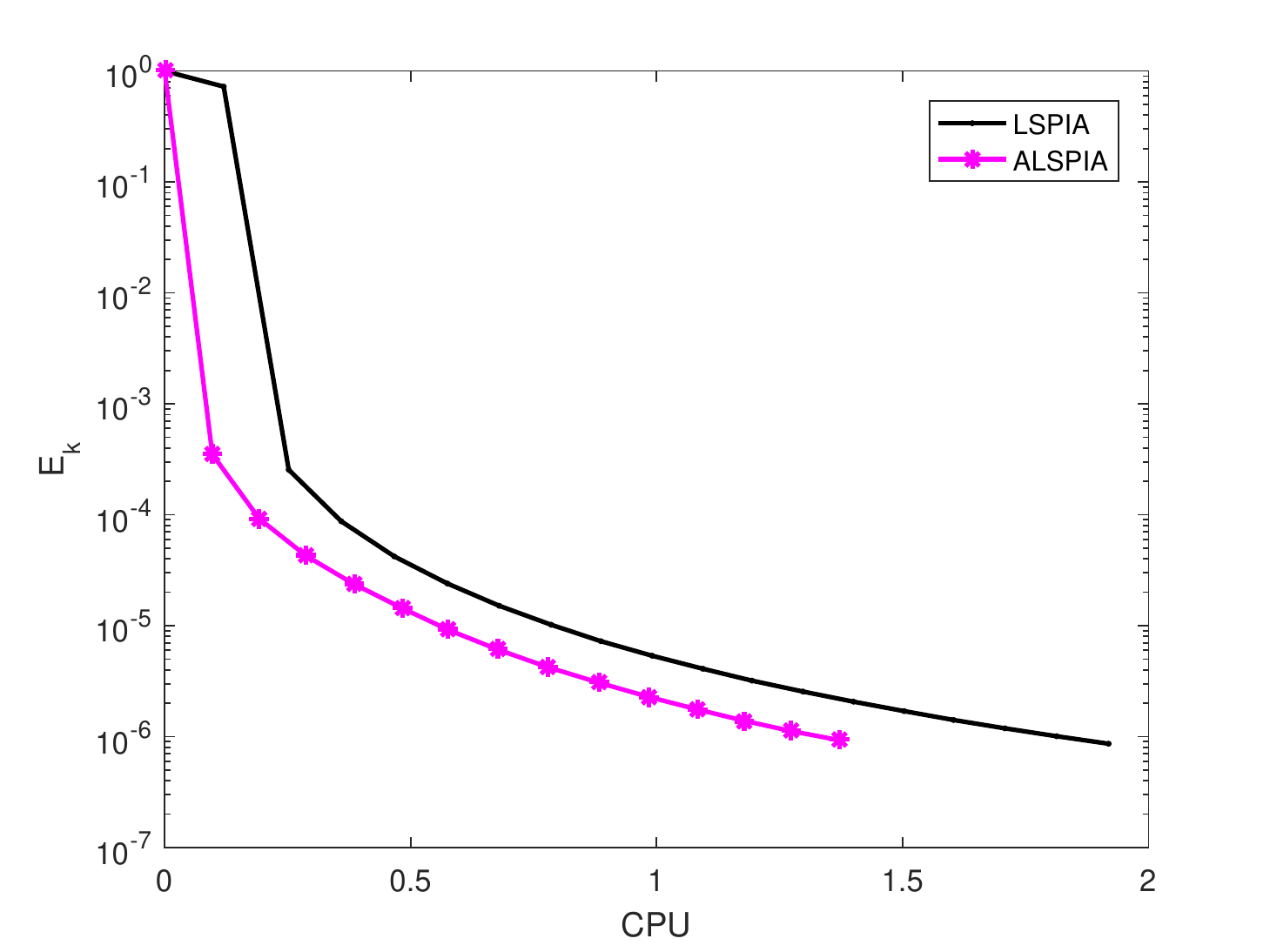}}
\caption{The initial data points, the cubic B-spline fitting curves, and the convergence behaviors of relative fitting error versus CPU time given by LSPIA and ALSPIA with $m=18897$ and $n=3000$ for Example \ref{ex:ALSPIA+curve4}.}
\label{fig:ex_singular2}
\end{figure}

\subsection{Surface fitting}  Similar to the case of curve fitting, we organize the execution details of ALSPIA for surface fitting as follows. Given an ordered point set $\left\{{\bm Q}_{hl} \right\}_{h=0,l=0}^{m,p}$ in $\mathbb{R}^3$, we assign the parameters  $\left\{x_h \right\}_{h=0}^m$ and $\left\{y_l \right\}_{l=0}^p$ as follows.
\begin{equation*}
 x_0 = 0,~x_h = x_{h-1}+\frac{\sum_{t=0}^p \bT{{\bm Q}_{ht}-{\bm Q}_{h-1, t}}}{\sum_{s=1}^m \sum_{t=0}^p \bT{{\bm Q}_{st}-{\bm Q}_{s-1, t}}},
~{\rm and}~
 x_m =1
\end{equation*}
for $h=1,2,\cdots,m$ and
\begin{equation*}
 y_0 = 0,~y_l = y_{l-1}+\frac{\sum_{s=0}^m \bT{{\bm Q}_{sl}-{\bm Q}_{s, l-1}}}{\sum_{h=0}^m \sum_{t=1}^p \bT{{\bm Q}_{st}-{\bm Q}_{s, t-1}}},
~{\rm and}~
 y_p =1
\end{equation*}
for $l=1,2,\cdots,p$. Two knot vectors are defined by
\begin{equation*}
  \left\{
  0,~0,~0,~0,
  ~\bar{x}_4,~\bar{x}_5,\cdots,\bar{x}_n
  ~1,~1,~1,~1
  \right\},
\end{equation*}
  with $\bar{x}_{h+3}=(1-\alpha_x)x_{i-1} + \alpha_x x_i$, $i = \lfloor hd_x \rfloor$, $\alpha_x = hd_x-i$, $d_x = (m+1)/(n-2)$ for $h=1,2,\cdots,n-3$ and
\begin{equation*}
  \left\{
  0,~0,~0,~0,
  ~\bar{y}_4,~\bar{y}_5,\cdots,\bar{y}_n
  ~1,~1,~1,~1
  \right\},
\end{equation*}
  with $\bar{y}_{l+3}=(1-\alpha_y)y_{j-1} + \alpha_y y_j$, $j = \lfloor ld_y \rfloor$, $\alpha_y=ld_y-j$, $d_y = (p+1)/(n-2)$ for $l=1,2,\cdots,n-3$. The initial control points are selected as
\begin{equation*}
{\bm P}^{(0)}_{ij} =  {\bm Q}_{f_1(i), f_2(j)},
\end{equation*}
where $f_1(0)=f_2(0)=0$, $f_1(n)=m$, $f_2(n)=p$, and for $i$, $j\geq 1$, $f_1(i)=\left\lfloor(m+1)i/n \right\rfloor$ and $f_2(j)=\left\lfloor (p+1)j/n \right\rfloor$.

For solving the least-squares problem in Examples  \ref{ex:ALSPIA+surface1} and \ref{ex:ALSPIA+surface2}, we list the numbers of iteration steps, the CPU times, and the relative fitting errors for the LSPIA and ALSPIA methods  with various ($m, p, n$) in Tables \ref{tab:IT+CPU-ex5} and \ref{tab:IT+CPU-ex6}. The results in the two tables show  that ALSPIA can always successfully compute an approximate solution, but LSPIA fails for the cases of ($m, p, n$) being ($50, 50, 20$), ($50, 50, 30$), and ($60, 60, 30$) in Example \ref{ex:ALSPIA+surface1} and  ($50, 50, 20$) in Example \ref{ex:ALSPIA+surface2}, respectively,  due to the number of the iteration steps exceeding $10^{4}$. For all convergent cases, the iteration counts and CPU times of ALSPIA are  appreciably smaller than those of LSPIA, with the IT (resp., CPU) speed-up being at least $30.422$ (resp., $29.535$) and at most attaining even $213.316$ (resp., $226.306$). Hence, the ALSPIA method considerably outperforms the LSPIA method in terms of both iteration counts and CPU times, too. In Figures \ref{fig:ex5-ALSPIA1+Result} and \ref{fig:ex6-ALSPIA1+Result}, we display the initial data points, the bi-cubic B-spline fitting surfaces, and the convergence curves of relative fitting error versus CPU time  given by LSPIA and ALSPIA with a fixed $(m, p, n)=(80, 80, 20)$. Without a doubt, the relative fitting error of ALSPIA is delaying more quickly concerning the increase of the CPU time than that of LSPIA.

\begin{table}[!htb]
 \normalsize
\caption{The numerical results of ${\textsc{E}}_\infty$, IT, and CPU for LSPIA and ALSPIA in Example  \ref{ex:ALSPIA+surface1} with various $m$, $p$, and $n$.}
\centering
\begin{tabular}{ cccccc}
\cline{1-6}
& ($m$, $p$, $n$)        & (50, 50, 20) & (60, 60, 20)& (70, 70, 20)& (80, 80, 20)\\
\cline{1-6}
LSPIA  &{${\textsc{E}}_\infty$}&$\#$&$9.98\times 10^{-7}$&$9.96\times 10^{-7}$&$9.97\times 10^{-7}$\\
       &{IT}        &$>10^{4}$&3561&2984&2676\\
       &{CPU}       &$\#$&7.312&9.346&12.526\\
\cline{1-6}
ALSPIA &{${\textsc{E}}_\infty$}&$9.95\times 10^{-7}$&$9.88\times 10^{-7}$&$9.62\times 10^{-7}$&$9.45\times 10^{-7}$\\
       &{IT}        &65&45&46&38\\
       &{CPU}       &0.092&0.092&0.135&0.168\\
\cline{1-6}
Speed-up & ${\textsc{S}}_{\textsc{it}}$  &$\#$&\textbf{79.133}&\textbf{64.870}&\textbf{70.421}\\
         & ${\textsc{S}}_{\textsc{cpu}}$ &$\#$&\textbf{79.678}&\textbf{69.388}&\textbf{74.452}\\
\hline
\hline
& ($m$, $p$, $n$)        & (50, 50, 30) & (60, 60, 30)& (70, 70, 30)& (80, 80, 30)\\
\cline{1-6}
LSPIA  &{${\textsc{E}}_\infty$}&$\#$&$\#$&$9.99\times 10^{-7}$&$9.99\times 10^{-7}$\\
       &{IT}        &$>10^{4}$&$>10^{4}$&8106&4818\\
       &{CPU}       &$\#$&$\#$&57.960&46.373\\
\cline{1-6}
ALSPIA &{${\textsc{E}}_\infty$}&$9.97\times 10^{-7}$&$9.62\times 10^{-7}$&$9.98\times 10^{-7}$&$9.82\times 10^{-7}$\\
       &{IT}        &41&35&38&39\\
       &{CPU}       &0.122&0.179&0.256&0.368\\
\cline{1-6}
Speed-up & ${\textsc{S}}_{\textsc{it}}$  &$\#$&$\#$&\textbf{213.316}&\textbf{123.538}\\
         & ${\textsc{S}}_{\textsc{cpu}}$ &$\#$&$\#$&\textbf{226.306}&\textbf{125.964}\\
\cline{1-6}
\end{tabular}
\begin{tablenotes}
\footnotesize
\item[1.] {\it The item ' $>10^{4}$' represents that the number of iteration steps exceeds $10^{4}$. In this case, the corresponding relative fitting error, CPU time, and two speed-ups are expressed as $'\#'$.}
\end{tablenotes}
\label{tab:IT+CPU-ex5}
\end{table}

\begin{table}[!htb]
 \normalsize
\caption{The numerical results of ${\textsc{E}}_\infty$, IT, and CPU for LSPIA and ALSPIA in Example  \ref{ex:ALSPIA+surface2} with various $m$, $p$, and $n$.}
\centering
\begin{tabular}{ cccccc}
\cline{1-6}
& ($m$, $p$, $n$)        & (50, 50, 20) & (80, 80, 20)& (100, 100, 20)& (100, 100, 30)\\
\cline{1-6}
LSPIA  &{${\textsc{E}}_\infty$}&$\#$&$9.98\times 10^{-7}$&$9.96\times 10^{-7}$&$9.98\times 10^{-7}$\\
       &{IT}        &$>10^{4}$&3774&1350&1306\\
       &{CPU}       &$\#$&16.552&11.873&24.967\\
\cline{1-6}
ALSPIA &{${\textsc{E}}_\infty$}&$9.94\times 10^{-7}$&$9.99\times 10^{-7}$&$9.58\times 10^{-7}$&$9.24\times 10^{-7}$\\
       &{IT}        &54&47&45&37\\
       &{CPU}       &0.069&0.202&0.385&0.675\\
\cline{1-6}
Speed-up & ${\textsc{S}}_{\textsc{it}}$  &$\#$&\textbf{80.298}&\textbf{30.000}&\textbf{35.297}\\
         & ${\textsc{S}}_{\textsc{cpu}}$ &$\#$&\textbf{81.941}&\textbf{30.858}&\textbf{37.007}\\
\hline
\hline
& ($m$, $p$, $n$)        & (120, 120, 20) & (120, 120, 30)& (120, 120, 40)& (120, 120, 50)\\
\cline{1-6}
LSPIA  &{${\textsc{E}}_\infty$}&$9.97\times 10^{-7}$&$9.96\times 10^{-7}$&$9.99\times 10^{-7}$&$9.98\times 10^{-7}$\\
       &{IT}        &1369&1293&1254&1526\\
       &{CPU}       &19.780&36.243&61.097&114.174\\
\cline{1-6}
ALSPIA &{${\textsc{E}}_\infty$}&$9.34\times 10^{-7}$&$8.83\times 10^{-7}$&$9.02\times 10^{-7}$&$9.47\times 10^{-7}$\\
       &{IT}        &45&31&22&35\\
       &{CPU}       &0.670&0.814&1.068&2.460\\
\cline{1-6}
Speed-up & ${\textsc{S}}_{\textsc{it}}$  &\textbf{30.422}&\textbf{41.710}&\textbf{57.000 }&\textbf{43.600}\\
         & ${\textsc{S}}_{\textsc{cpu}}$ &\textbf{29.535}&\textbf{44.549}&\textbf{57.221 }&\textbf{46.418}\\
\cline{1-6}
\end{tabular}
\label{tab:IT+CPU-ex6}
\end{table}

\begin{figure}[!htb]
\centering
    \subfigure[Initial data points]{
		\includegraphics[width=0.45\textwidth]{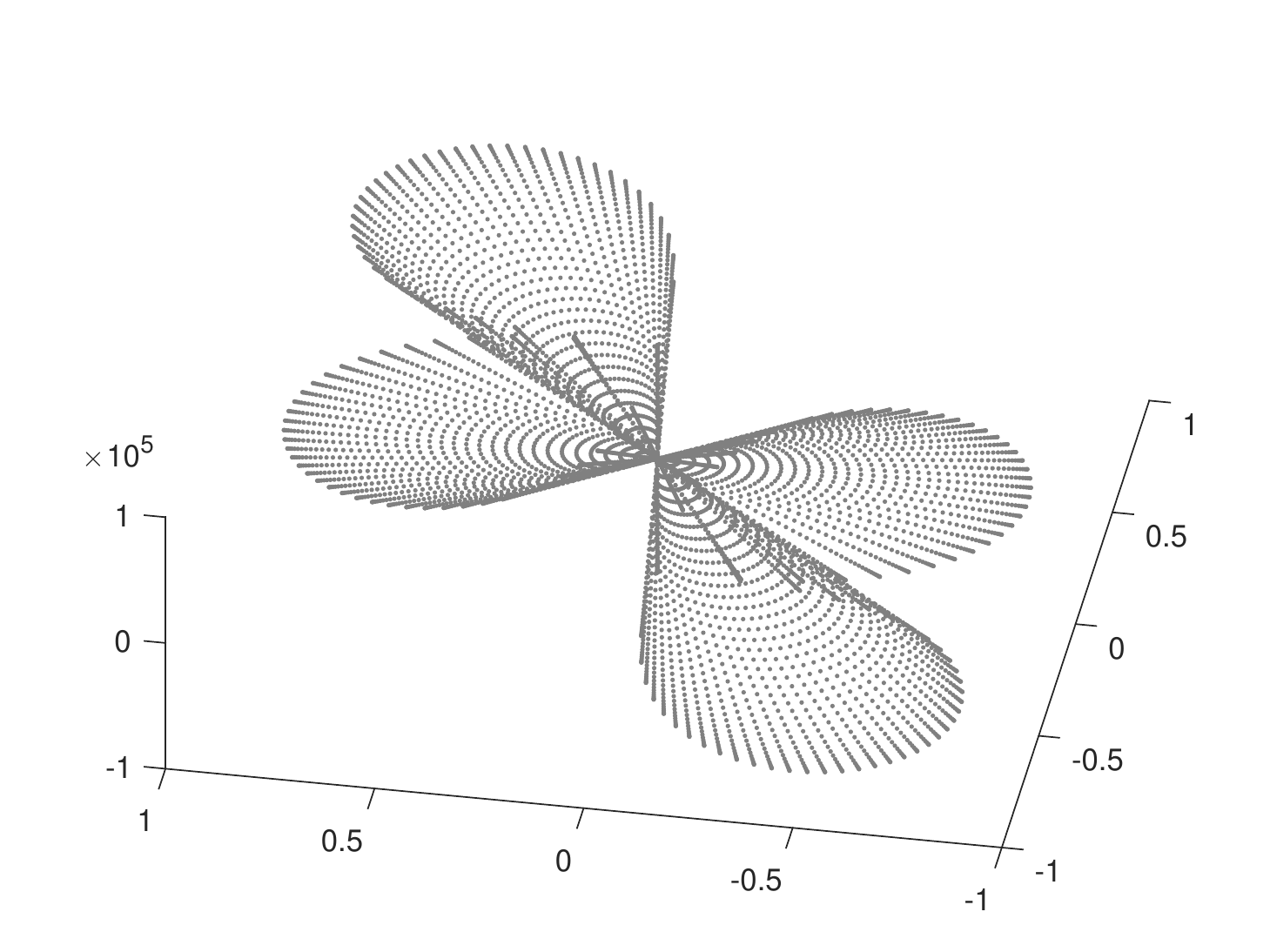}}
	\subfigure[Surface by LSPIA]{
	    \includegraphics[width=0.45\textwidth]{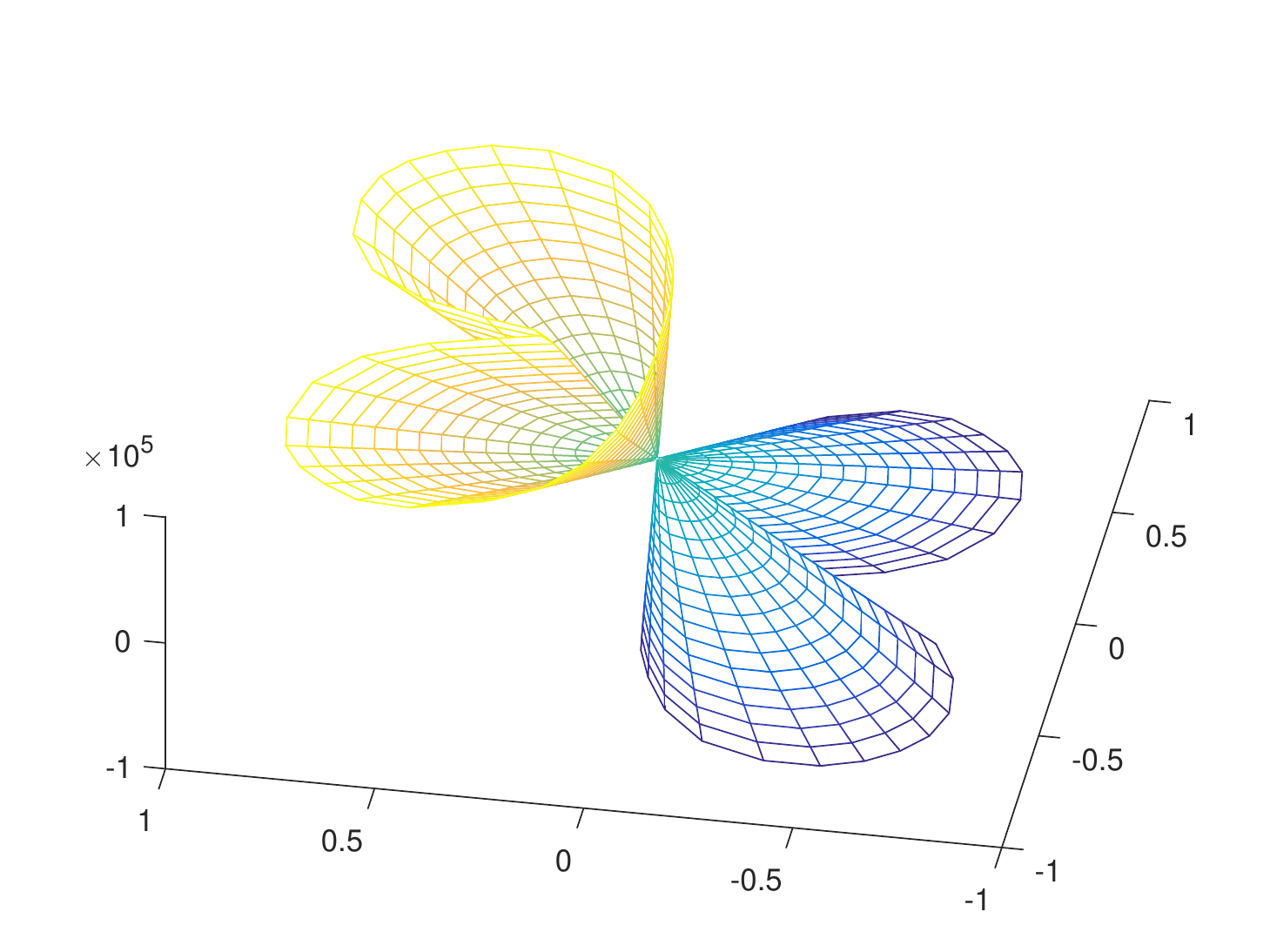}}
	\subfigure[Surface by ALSPIA]{
	    \includegraphics[width=0.45\textwidth]{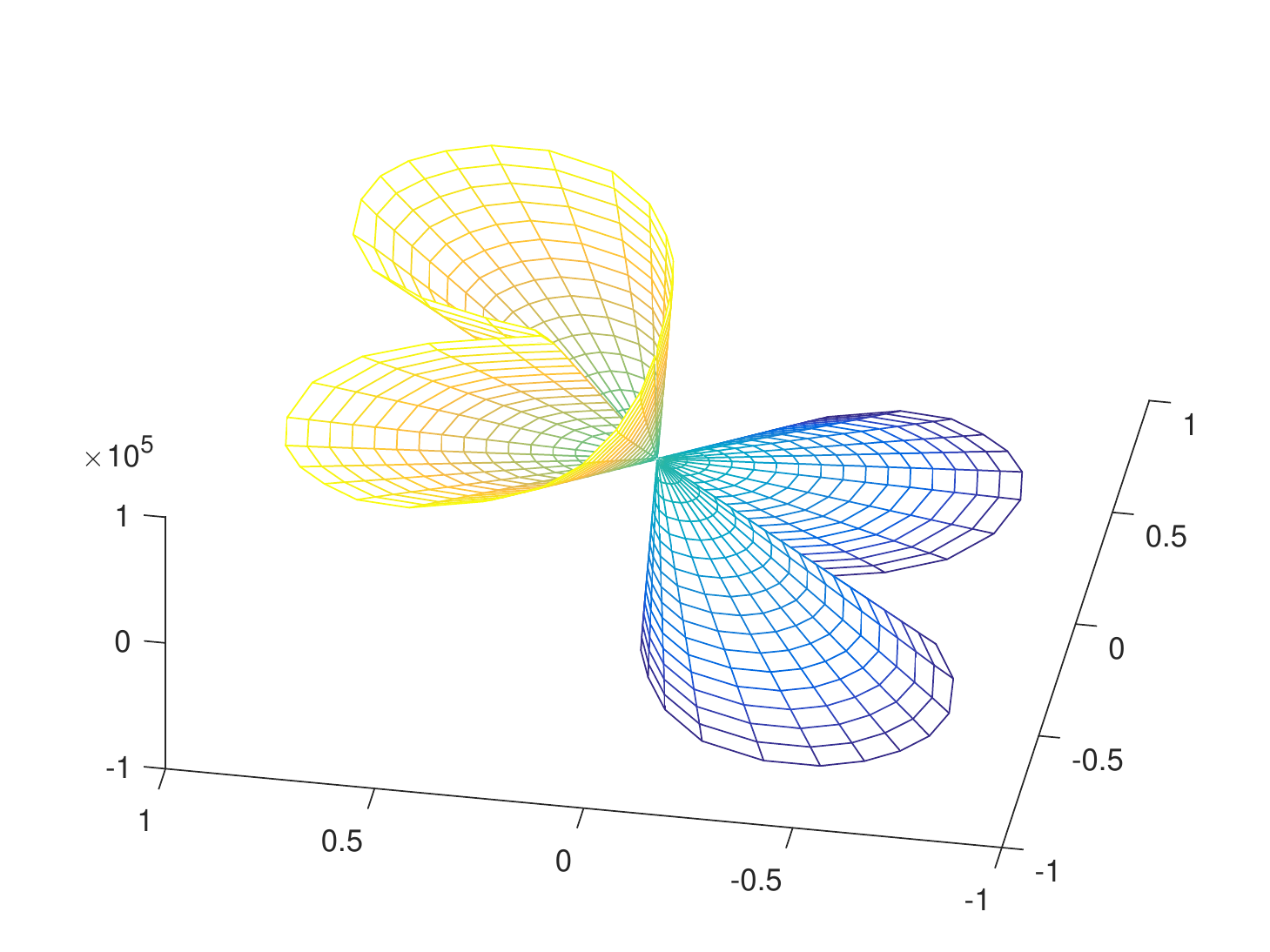}}
	\subfigure[$E_k$ vs CPU]{
	    \includegraphics[width=0.45\textwidth]{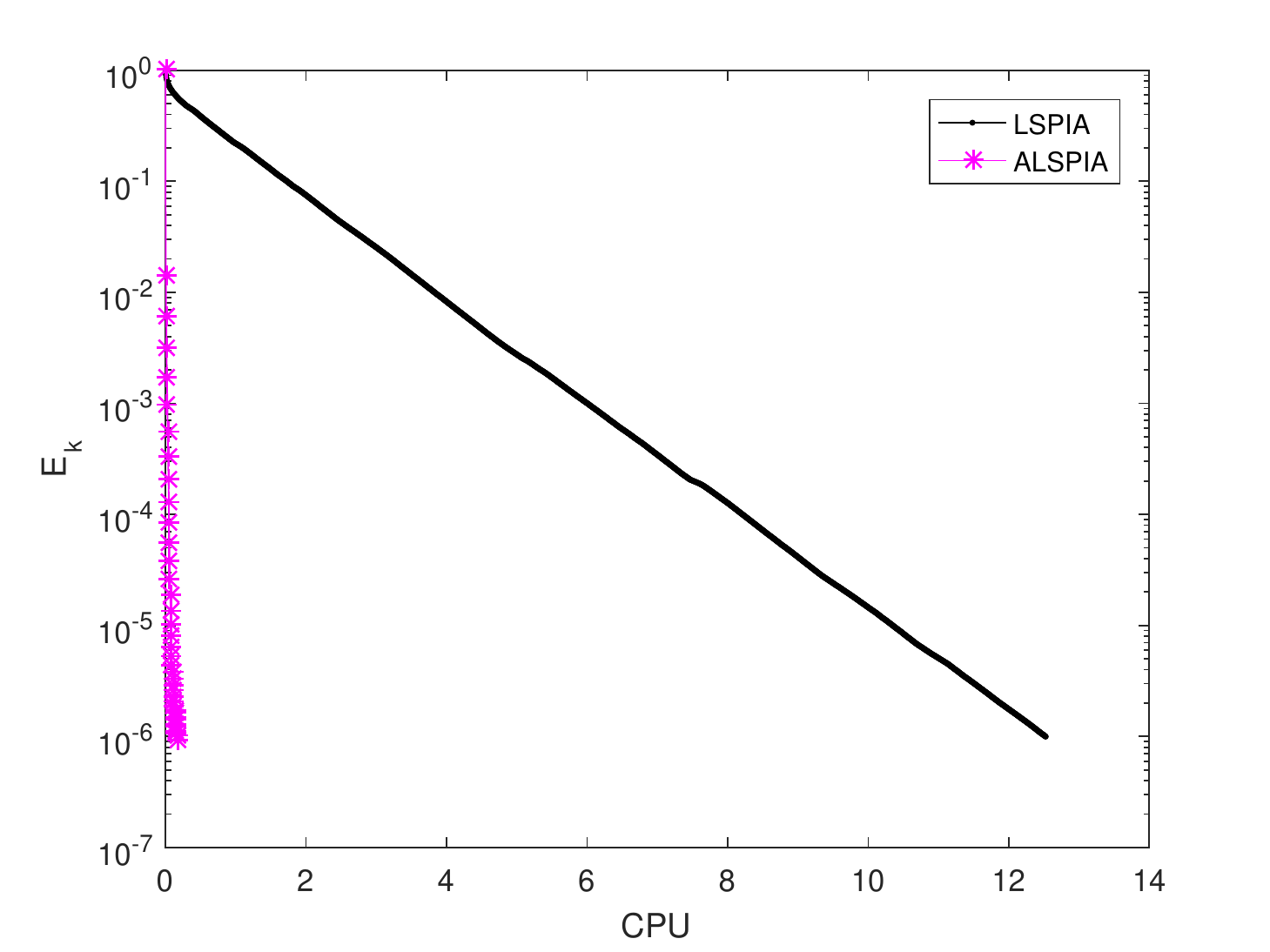}}
\caption{The initial data points, the bi-cubic B-spline fitting surfaces, and the convergence behaviors of relative fitting error versus CPU time given by LSPIA and ALSPIA with $m=80$, $p=80$, and $n=20$ for Example \ref{ex:ALSPIA+surface1}.}
\label{fig:ex5-ALSPIA1+Result}
\end{figure}

\begin{figure}[!htb]
\centering
    \subfigure[Initial data points]{
		\includegraphics[width=0.45\textwidth]{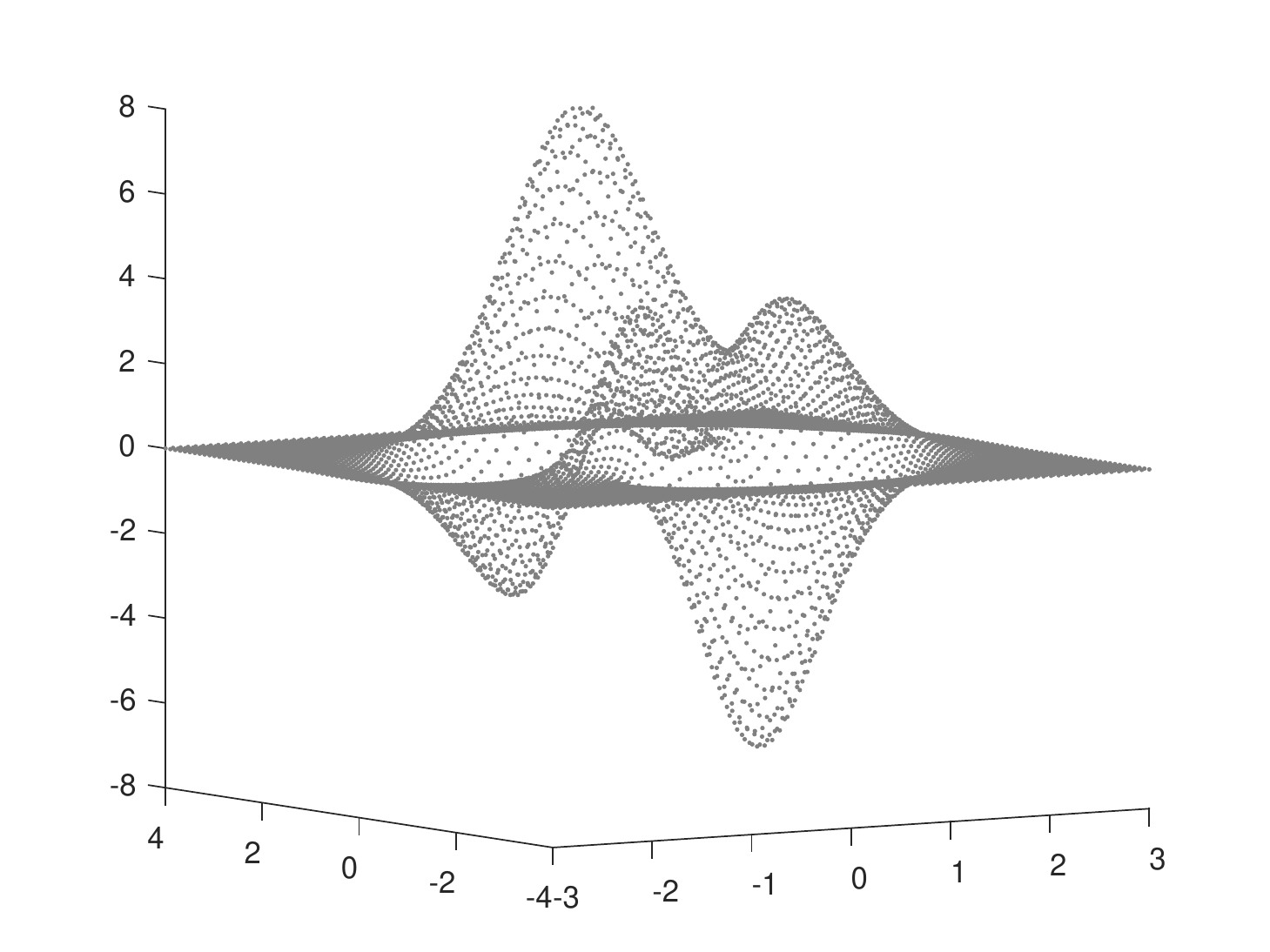}}
	\subfigure[Surface by LSPIA]{
	    \includegraphics[width=0.45\textwidth]{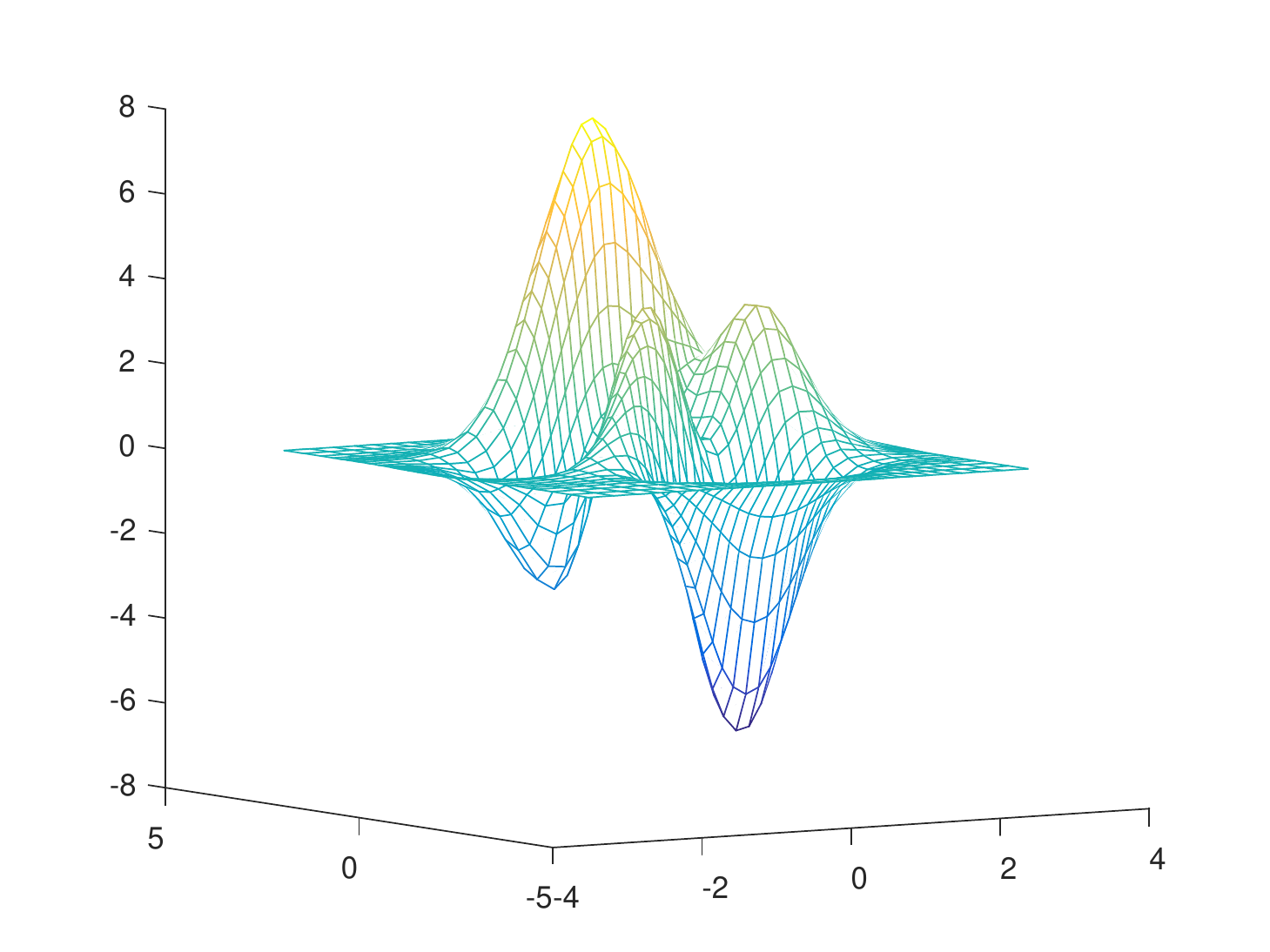}}
	\subfigure[Surface by ALSPIA]{
	    \includegraphics[width=0.45\textwidth]{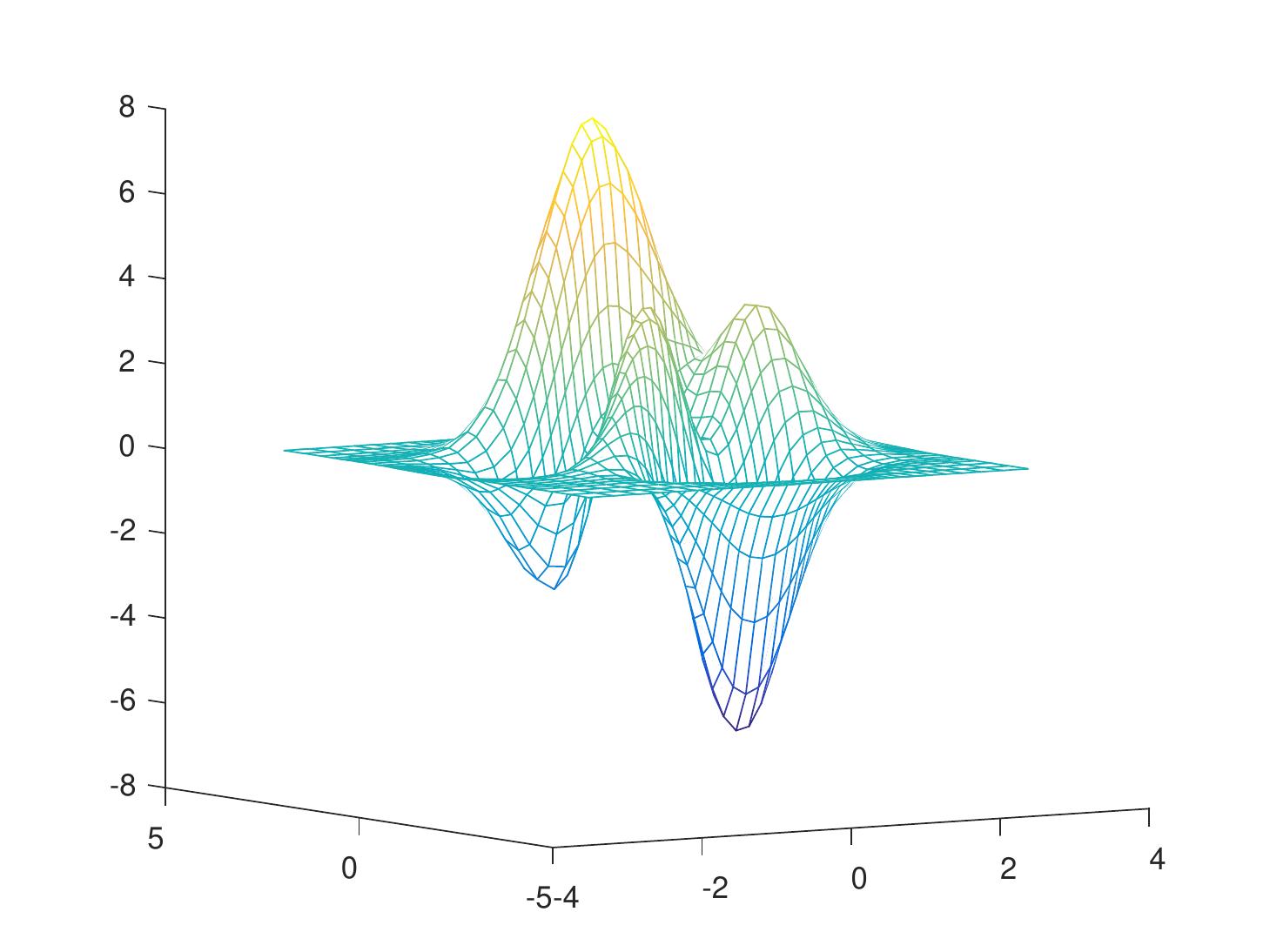}}
	\subfigure[$E_k$ vs CPU]{
	    \includegraphics[width=0.45\textwidth]{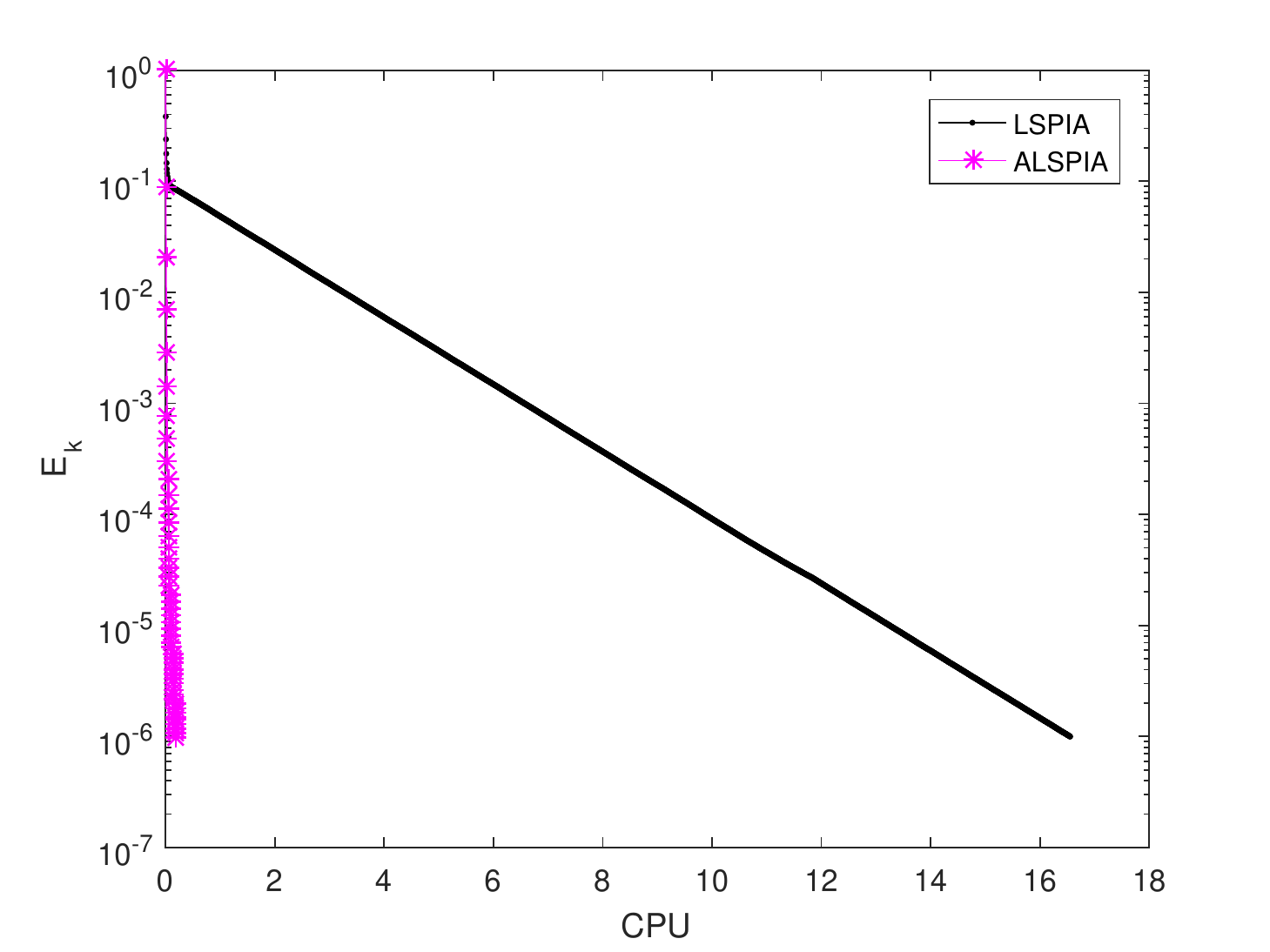}}
\caption{The initial data points, the bi-cubic B-spline fitting surfaces, and the convergence behaviors of relative fitting error versus CPU time given by LSPIA and ALSPIA with $m=80$, $p=80$, and $n=20$ for Example \ref{ex:ALSPIA+surface2}.}
\label{fig:ex6-ALSPIA1+Result}
\end{figure}
\section{Conclusions}\label{Sec:5}
In this work, we propose the ALSPIA method to fit data. Our approach is based on the idea that we update the control points by utilizing an adaptive step size. When the step size is the same constant, the ALSPIA method automatically reduces to the original LSPIA method. We choose them according to the roots of Chebyshev polynomials. We prove that ALSPIA is convergent in two cases. That is, ALSPIA has sub-linear and linear convergence rates when the collocation matrix is rank deficient and of full-column rank, respectively. Our convergence analysis reveals that ALSPIA is faster than the original LSPIA  method. In the numerical experiments, it is shown that ALSPIA  outperforms the classical LSPIA  method in terms of both iteration counts and CPU times.

\section*{Appendix}\label{App}\noindent
In this appendix, we briefly review some properties of the Chebyshev polynomials. For $k\geq 1$, the first kind of Chebyshev polynomials can be defined by the following recursive relation
\begin{align*}
P_0(x)=1,~~P_1(x)=x,~{\rm and}~~  P_{k+1}(x) = 2xP_{k}(x) - P_{k-1}(x).
\end{align*}

Chebyshev polynomials have many interesting properties \cite{97Demmel,Saad2003}. Here are a few, which are easy to prove from their definition.

\begin{lemma}\label{lemma:Chebyshev+Demmel}
\cite[Lemma 6.7]{{97Demmel}} Chebyshev polynomials have the following properties.
\begin{enumerate}[$(1)$]
\setlength{\itemindent}{0.6cm}
\item $P_{k}(1) = 1$.
\item $P_{k}(x) =  \cos(k \cdot\arccos(x))$ if $|x|\leq 1$.
\item $|P_{k}(x)| \leq 1$ if $|x| \leq 1$.
\item The zeros of $P_{k}(x)$ are $x_{\ell} = \cos\left((2\ell+1)\pi/(2k)\right)$ for $\ell=0,1,\cdots,k-1$.
\item $P_{k}(x) = \frac{1}{2} \left[ (x+\sqrt{x^2-1})^k + (x+\sqrt{x^2-1})^{-k} \right] $ if $|x| \geq 1$.
\end{enumerate}
\end{lemma}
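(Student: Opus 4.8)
The plan is to obtain all five properties directly from the three‑term recurrence $P_0(x)=1$, $P_1(x)=x$, $P_{k+1}(x)=2xP_k(x)-P_{k-1}(x)$ by (two‑step) induction on $k$, using two elementary identities: the cosine addition formula $\cos((k+1)\theta)+\cos((k-1)\theta)=2\cos\theta\,\cos(k\theta)$ on $|x|\le 1$, and its hyperbolic counterpart $z^{k+1}+z^{-(k+1)}=(z+z^{-1})(z^k+z^{-k})-(z^{k-1}+z^{-(k-1)})$ on $|x|\ge 1$. Along the way I would also record, by the same recurrence, that $P_k$ is a polynomial of exact degree $k$ (the leading coefficient is nonzero), which is needed in part $(4)$.

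First I would prove $(1)$: since $P_0(1)=P_1(1)=1$ and $P_{k+1}(1)=2\cdot1\cdot P_k(1)-P_{k-1}(1)$, induction gives $P_k(1)=2-1=1$ for all $k$. For $(2)$ I would set $x=\cos\theta$ with $\theta=\arccos x\in[0,\pi]$ and prove $P_k(\cos\theta)=\cos(k\theta)$ by induction: the cases $k=0,1$ are immediate, and the cosine identity above shows that $\theta\mapsto\cos((k+1)\theta)$ satisfies precisely the recurrence defining $P_{k+1}$, so the identity propagates. Property $(3)$ then follows at once from $(2)$ because $|\cos(k\theta)|\le 1$. For $(4)$, part $(2)$ turns $P_k(x)=0$ into $\cos(k\theta)=0$, i.e.\ $k\theta=(2\ell+1)\pi/2$; the solutions $\theta$ lying in $[0,\pi]$ are exactly $\theta_\ell=(2\ell+1)\pi/(2k)$ for $\ell=0,1,\dots,k-1$, yielding $k$ distinct numbers $x_\ell=\cos\theta_\ell$, and since $\deg P_k=k$ these must be all of its zeros.

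Finally, for $(5)$ with $|x|\ge 1$ I would put $z=x+\sqrt{x^2-1}$ and observe that $z^{-1}=x-\sqrt{x^2-1}$ because $(x+\sqrt{x^2-1})(x-\sqrt{x^2-1})=x^2-(x^2-1)=1$; hence $z+z^{-1}=2x$. Setting $R_k(x)=\tfrac12(z^k+z^{-k})$, one has $R_0=1$, $R_1=\tfrac12(z+z^{-1})=x$, and the hyperbolic identity shows $R_{k+1}=2xR_k-R_{k-1}$, so $R_k\equiv P_k$ by induction, which is the claimed closed form.

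I do not anticipate a genuine obstacle: every step is routine induction. The two points that merit a little care are the root count in $(4)$ — one must check that restricting $\arccos$ to $[0,\pi]$ produces exactly $k$ admissible angles, so that the listed values exhaust the zeros of a degree‑$k$ polynomial — and the identity $z\cdot z^{-1}=1$ in $(5)$, on which the whole closed‑form argument rests.
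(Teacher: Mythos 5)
Your proof is correct and complete. Note that the paper itself gives no proof of this lemma: it is quoted verbatim from Demmel's book (Lemma 6.7 of \cite{97Demmel}) with only the remark that the properties ``are easy to prove from their definition,'' so there is nothing in the paper to compare against. Your argument --- two-step induction on the three-term recurrence, the cosine addition formula for $|x|\leq 1$, the symmetric identity $z^{k+1}+z^{-(k+1)}=(z+z^{-1})(z^{k}+z^{-k})-(z^{k-1}+z^{-(k-1)})$ with $z=x+\sqrt{x^{2}-1}$ for $|x|\geq 1$, and the degree count to show the $k$ listed angles exhaust the zeros in $(4)$ --- is exactly the standard derivation, and the two points you flag as needing care (injectivity of $\cos$ on $[0,\pi]$ giving $k$ distinct roots, and $z\cdot z^{-1}=1$) are indeed the only places where anything could go wrong; both are handled correctly.
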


An additional theorem that is useful in theoretical work with Krylov subspace methods and that also involves Chebyshev polynomials is as follows.

\begin{lemma}\label{lemma:Chebyshev2}
\cite[Theorem 6.25]{Saad2003} Let $\mathbb{P}_k$ denote the set of all polynomials of degree at most $k$ and $[\alpha,\beta]$ be a real positive interval with $\gamma<\alpha$ or $\gamma>\beta$. The minimum
  \begin{align*}
    \min_{P(x)\in \mathbb{P}_k, P(\gamma)=1}
    \max_{x\in [\alpha,\beta]}
    |P(x)|
  \end{align*}
is reached by the polynomial
\begin{align*}
P^{\ast}_k(x) =
\frac{P_{k}\left( \frac{2x}{\beta-\alpha} -  \frac{\beta+\alpha}{\beta-\alpha} \right)}
     {P_{k}\left( \frac{2\gamma}{\beta-\alpha} -  \frac{\beta+\alpha}{\beta-\alpha} \right)}.
\end{align*}
\end{lemma}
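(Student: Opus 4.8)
The plan is to reduce the claim to the classical Chebyshev minimax property on $[-1,1]$ and then establish optimality through a sign-alternation and zero-counting argument. First I would introduce the affine substitution $t=\phi(x):=\frac{2x-(\alpha+\beta)}{\beta-\alpha}$, which maps $[\alpha,\beta]$ bijectively onto $[-1,1]$ and carries $\gamma$ to $\widetilde{\gamma}:=\frac{2\gamma-(\alpha+\beta)}{\beta-\alpha}$; the hypothesis $\gamma<\alpha$ or $\gamma>\beta$ is exactly the statement $|\widetilde{\gamma}|>1$. Since $\phi$ is a degree-one bijection, writing $P(x)=q(\phi(x))$ sets up a degree-preserving bijection between $\{P\in\mathbb{P}_k:P(\gamma)=1\}$ and $\{q\in\mathbb{P}_k:q(\widetilde{\gamma})=1\}$ under which $\max_{x\in[\alpha,\beta]}|P(x)|=\max_{t\in[-1,1]}|q(t)|$, so the minimization is equivalent to minimizing $\max_{t\in[-1,1]}|q(t)|$ subject to $q(\widetilde{\gamma})=1$. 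The proposed minimizer is $q^{\ast}(t)=P_k(t)/P_k(\widetilde{\gamma})$, which is well defined because the zeros of $P_k$ all lie in $(-1,1)$ by Lemma \ref{lemma:Chebyshev+Demmel}(4), so $P_k(\widetilde{\gamma})\neq 0$; unwinding $\phi$ recovers precisely the polynomial $P^{\ast}_k(x)$ in the statement.

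Second I would evaluate $q^{\ast}$. By Lemma \ref{lemma:Chebyshev+Demmel}(3) one has $|P_k(t)|\le 1$ on $[-1,1]$, hence $\max_{t\in[-1,1]}|q^{\ast}(t)|=1/|P_k(\widetilde{\gamma})|=:M$, and this value is attained at the $k+1$ distinct nodes $t_j=\cos(j\pi/k)$ for $j=0,1,\dots,k$, since $P_k(t_j)=\cos(j\pi)=(-1)^j$ by Lemma \ref{lemma:Chebyshev+Demmel}(2). Consequently $q^{\ast}(t_j)=(-1)^j\sigma M$ for the fixed sign $\sigma\in\{+1,-1\}$ of $P_k(\widetilde{\gamma})$, so the values of $q^{\ast}$ at $t_0>t_1>\dots>t_k$ equioscillate between $+M$ and $-M$.

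Third, for optimality I would argue by contradiction: suppose $q\in\mathbb{P}_k$ satisfies $q(\widetilde{\gamma})=1$ and $\max_{t\in[-1,1]}|q(t)|<M$, and set $d:=q^{\ast}-q\in\mathbb{P}_k$. At each node $t_j$ the strict bound $|q(t_j)|<M=|q^{\ast}(t_j)|$ forces $d(t_j)$ to have the same (nonzero) sign as $q^{\ast}(t_j)$; since those signs alternate over $j=0,\dots,k$, the polynomial $d$ changes sign on each of the $k$ open intervals $(t_{j+1},t_j)$ and hence has at least $k$ distinct zeros in $(-1,1)$. Moreover $d(\widetilde{\gamma})=q^{\ast}(\widetilde{\gamma})-q(\widetilde{\gamma})=1-1=0$ with $\widetilde{\gamma}\notin[-1,1]$, a $(k{+}1)$-st distinct zero. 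A polynomial of degree at most $k$ with $k+1$ distinct zeros is identically zero, so $q=q^{\ast}$, contradicting $\max|q|<\max|q^{\ast}|$. Thus no competitor beats $q^{\ast}$, and translating back through $\phi$, $P^{\ast}_k$ attains the minimum.

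The main obstacle is getting the sign bookkeeping in the equioscillation step exactly right: one must confirm that the $k+1$ extremal nodes are genuinely distinct (true for $k\ge1$; the case $k=0$ is trivial), that the sign $\sigma$ is handled consistently whether $\widetilde{\gamma}>1$ or $\widetilde{\gamma}<-1$ (using $P_k(-t)=(-1)^kP_k(t)$), and --- crucially --- that it is the \emph{strict} inequality $|q(t_j)|<M$ that makes $d(t_j)$ inherit the sign of $q^{\ast}(t_j)$. The remaining ingredients (the affine change of variables, the evaluation $\max|q^{\ast}|=1/|P_k(\widetilde{\gamma})|$, and the observation that positivity of $[\alpha,\beta]$ is not actually used, only $\gamma\notin[\alpha,\beta]$) are routine. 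One could alternatively invoke the general Chebyshev equioscillation theorem, but the direct zero-counting argument is shorter and self-contained.
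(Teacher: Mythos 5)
The paper offers no proof of this lemma: it is quoted directly as Theorem 6.25 of Saad's book, so there is nothing in-paper to compare against. Your argument is correct and is the standard proof of that theorem — affine reduction to $[-1,1]$, evaluation of the candidate $P_k(t)/P_k(\widetilde{\gamma})$ via the equioscillation of $P_k$ at the $k+1$ extrema $\cos(j\pi/k)$, and optimality by counting the $k$ sign-change zeros of the difference polynomial in $(-1,1)$ together with the extra zero at $\widetilde{\gamma}\notin[-1,1]$, which forces a polynomial of degree at most $k$ to vanish identically. Your side remarks are also accurate: the strictness of $|q(t_j)|<M$ is exactly what pins down the sign of $d(t_j)$, the case $k=0$ is trivial, and the positivity of $[\alpha,\beta]$ is never used — only $\gamma\notin[\alpha,\beta]$, which guarantees $|\widetilde{\gamma}|>1$ and hence $P_k(\widetilde{\gamma})\neq 0$.
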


\section*{Acknowledgments}
The authors would like to thank Professor Hongwei Lin very much for his valuable suggestions to construct a rank deficient collocation matrix. This work is partially supported by the Natural Science Foundation of Hunan Province under grant  2020JJ5267 and  the National Natural Science Foundation of China under grant 12101225.



\begin{thebibliography}{99}

\addtolength{\itemsep}{0.5em}

\bibitem{08CLTYC} Zhongxian Chen, Xiaonan Luo, Le Tan, Binghong Ye, and Jiapeng Chen. {\em Progressive interpolation based on Catmull-Clark subdivision surfaces}. Computer Graphics Forum, 2008, 27(7):1823-1827.

\bibitem{97Demmel} James W. Demmel. {\em Applied Numerical Linear Algebra}. SIAM, Philadelphia, PA, USA, 1997.

\bibitem{14DL} Chongyang Deng and Hongwei Lin. {\em Progressive and iterative approximation for least-squares B-spline curve and surface fitting}. Computer-Aided Design, 2014, 47:32-44.

\bibitem{12DM} Chongyang Deng and Weiyin Ma. {\em Weighted progressive interpolation of Loop subdivision surfaces}. Computer-Aided Design, 2012, 44(5):424-431.

\bibitem{19EL} A. Ebrahimi and G. B. Loghmani. {\em A composite iterative procedure with fast convergence rate for the progressive iteration approximation of curves}. Journal of Computational and Applied Mathematics, 2019, 359:1-15.

\bibitem{ENH10} Tommy Elfving, Tourag Nikazad,  and Per Christian Hansen.  {\em  Semi-convergence and relaxation parameters for a class of SIRT algorithms}. Electronic Transactions on Numerical Analysis, 2010, 37:321-336.

\bibitem{Golub2013} Gene H. Golub and Charles F. Van Loan.  {\em Matrix Computations}.  The fourth edition, Johns Hopkins University Press, Baltimore, MD, 2013.

\bibitem{61GV} Gene H. Golub and Richard S. Varga. {\em Chebyshev semi-iterative methods, successive overrelaxation iterative methods, and second order Richardson iterative methods}. Numerische Mathematik, 1961, 3:147-156.

\bibitem{22HL} Yusuf Fatihu Hamza and Hong-Wei Lin. {\em Conjugate-gradient progressive-iterative approximation for Loop and Catmull-Clark subdivision surface interpolation}. Journal of Computer Science and Technology, 2022, 37(2):487-504.

\bibitem{20HW} Zheng-Da Huang and Hui-Di Wang. {\em On a progressive and iterative approximation method with memory for least-square fitting}.  Computer Aided Geometric Design,  2020, 82:101931.



\bibitem{71LF} V. I. Lebedev and S. A. Finogenov. {\em The order of choice of the iteration parameters in the cyclic Chebyshev iteration method}. Computational Mathematics and Mathematical Physics, 1971, 11:425-438.

\bibitem{18LCZ} Hongwei Lin, Qi Cao, and Xiaoting Zhang. {\em The convergence of least-squares progressive iterative approximation for singular least-squares fitting system}. Journal of Systems Science and Complexity, 2018, 31(6):1618-1632.

\bibitem{18LMD} Hongwei Lin, Takashi Maekawa, and Chongyang Deng. {\em Survey on geometric iterative methods and their applications}. Computer-Aided Design, 2018, 95:40-51.

\bibitem{13LZ} Hongwei Lin and Zhiyu Zhang. {\em An efficient method for fitting large data sets using T-splines}. SIAM Journal on Scientific Computing, 2013, 35(6):A3052-A3068.

\bibitem{11LZ} Hongwei Lin and Zhiyu Zhang. {\em An extended iterative format for the progressive-iteration approximation}. Computers and Graphics, 2011, 35(5):967-975.

\bibitem{20LHL} Chengzhi Liu, Xuli Han, and Juncheng Li. {\em Preconditioned progressive iterative approximation for triangular B\'{e}zier patches and its application}. Journal of Computational and Applied Mathematics, 2020, 366:112389.

\bibitem{18LLGZHS} Mingzeng Liu,  Baojun Li, Qingjie Guo, et al. {\em Progressive iterative approximation for regularized least-square bivariate B-spline surface fitting}. Journal of Computational and Applied Mathematics, 2018, 327:175-187.

\bibitem{21LLH} Chengzhi Liu, Zhongyun Liu, and Xuli Han. {\em Preconditioned progressive iterative approximation for tensor product B\'{e}zier patches}. Mathematics and Computers in Simulation, 2021, 185:372-383.

\bibitem{21Wang} Huidi Wang. {\em On extended progressive and iterative approximation for least-squares fitting}. The Visual Computer, 2022, 38:591-602.

\bibitem{21WLLMD} Zhihao Wang, Yajuan Li, Jianzhen Liu, Weiyin Ma, and Chongyang Deng. {\em Gauss-Seidel progressive iterative approximation (GS-PIA) for subdivision surface interpolation}. 2021, https://doi.org/10.1007/s00371-021-02318-9.

\bibitem{02WARV} V. Weiss, L. Andor, G. Renner, and T. V\'{a}radya. {\em Advanced surface fitting techniques}. Computer Aided Geometric Design, 2002, 19(1):19-42.

\bibitem{2014Maxim} Maxim A. Olshanskii and Eugene E. Tyrtyshnikov. {\em Iterative Methods for Linear Systems: Theory and Applications}. SIAM, Philadelphia, PA, USA, 2014.

\bibitem{03PS} V. Pereyr and G. Scherer. {\em Large scale least squares scattered data fitting}. Applied Numerical Mathematics, 2003, 44(1-2):225-239.

\bibitem{97PT} Les Piegl and Wayne Tiller. {\em The NURBS Book}. The second edition, Springer-Verlag, New York, USA, 1997.

\bibitem{22RJ} Dany Rios and Bert J\"{u}ttler. {\em LSPIA, (stochastic) gradient descent, and parameter correction}. Journal of Computational and Applied Mathematics, 2022, 406:113921.

\bibitem{Saad2003} Yousef Saad. {\em Iterative Methods for Sparse Linear Systems}.  The second edition, SIAM, Philadelphia, PA, USA, 2003.

\bibitem{16ZGT} Li Zhang, Xianyu Ge, and Jieqing Tan. {\em Least square geometric iterative fitting method for generalized B-spline curves with two different kinds of weights}. The Visual Computer, 2016, 32:1109-1120.

\end{thebibliography}
\end{document}